\def\jump#1{{[\![#1]\!]}}
\def\bn{{\bf n}}
\def\bRh{\mathbb{Q}_h}
\def\T{\mathcal{T}}
\newtheorem{lemma}{Lemma}[section]
\begin{document}
\title{Weak Galerkin Finite Element methods for Parabolic Equations}
\author{Qiaoluan H. Li}
\address{Department of Mathematics, Towson University, Towson, MD 22152} \email{qli@towson.edu}
\author{Junping Wang}
\address{Division of Mathematical Sciences, National Science
Foundation, Arlington, VA 22230} \email{jwang@nsf.gov}
\thanks{The research of Wang was supported by the NSF IR/D program,
while working at the National Science Foundation. However, any
opinion, finding, and conclusions or recommendations expressed in
this material are those of the author and do not necessarily reflect
the views of the National Science Foundation.}

\begin{abstract}
A newly developed weak Galerkin method is proposed to solve
parabolic equations. This method allows the usage of totally
discontinuous functions in approximation space and preserves the
energy conservation law. Both continuous and discontinuous time weak
Galerkin finite element schemes are developed and analyzed. Optimal
order error estimates in both $H^1$ and $L^2$ norms are established.
Numerical tests are performed and reported.
\end{abstract}

\subjclass[2010]{Primary 65N15, 65N30, 76D07; Secondary, 35B45,
35J50}

\keywords{weak Galerkin methods, finite element methods, parabolic
equations}

\maketitle

\section{Introduction}
In this paper, we consider the initial-boundary value problem
\begin{eqnarray}\label{continuousP}
               u_t-\nabla\cdot(a\nabla u)  &=& f \quad \mbox{ in } \Omega,\quad t\in J,\label{heat}\nonumber\\
               u &=& g\quad \mbox{ on }\partial\Omega, \quad  t\in J,\\
               u(\cdot, 0)&=&\psi\quad \mbox{ in } \Omega,\nonumber
             \end{eqnarray}
where $\Omega$ is a polygonal or polyhedral domain in $\mathbb{R}^d$
$(d=\,2,\,3)$ with Lipschitz-continuous boundary $\partial \Omega$,
$J=(0, \, \bar{t}\,]$, and $a\,=\,(a_{ij}(x,t))_{d\times d}\in
[L^\infty(\Omega\times \bar{J})]^{d^2}$ is a symmetric matrix-valued
function satisfying the following property: there exists a constant
$\alpha>0$ such that
$$
\alpha \xi^T\xi\leq \xi^T a\xi,\quad \forall \xi\in \mathbb{R}^d.
$$
For simplicity, we shall concentrate on two-dimensional problems
only (i.e., $d=2$).

For any nonnegative integer $m$, let $H^m(\Omega)$
be the standard Sobolov space, which is the collection of all real-valued
functions defined on $\Omega$ with square integrable derivatives up
to order $m$ with
semi-norm
$$|\psi|_{s,\Omega}\equiv\{\sum_{|\alpha|=s}\int_\Omega\,|\partial^\alpha
\psi|^2\,dx\}^{\frac12}, $$
and norm
$$
\|\psi\|_{m,\Omega}\equiv(\sum_{j=0}^m |\psi|^2_{j,\Omega})^{\frac12}.
$$
For simplicity, we use $\|\cdot\|$ for the $L^2$ norm.

The parabolic problem can be written in a variational form as
follows
\begin{eqnarray}\label{weakform}
(u_t,v)+(a\nabla u,\nabla v)&=&(f,v)\quad \forall v\in H_0^1(\Omega),\,t\in J,\\
u(\cdot, 0)&=&\psi,\nonumber
\end{eqnarray}
where $u$ is called a weak solution if $u\in L^2(0,t;H^1(\Omega))$
and $u_t\in L^2(0,t;H^{-1}(\Omega))$, and if $u=g$ on
$\partial\Omega$.

Parabolic problems have been treated by various numerical methods.
For finite element methods, we refer to \cite{VThomee} and
\cite{AFEM}. Discontinuous Galerkin finite element methods were
studied in \cite{DGM} and \cite{DGM2}. In \cite{FVM} and
\cite{FVM2}, finite volume methods were presented, which were based
on the integral conservation law rather than the differential
equation. The integral conservation law was then enforced for small
control volumes defined by the computational mesh.

The goal of this paper is to consider a newly developed weak
Galerkin (WG) finite element method for parabolic equation which is
based on the definition of a discrete weak gradient operator
proposed in \cite{JW_WG}. In this numerical method, the analysis can
be done by using the framework of Galerkin methods, and totally
discontinuous functions are allowed to be used as the approximation
space. Furthermore, the approximation results also satisfy the
energy conservation law.

The rest of this paper is organized as follows. In Section 2, we
introduce some notation and establish a continuous time and
discontinuous time weak Galerkin finite element scheme for the
parabolic initial boundary-value problem (\ref{continuousP}). In
Section 3, we prove the energy conservation law of the weak Galerkin
approximation. Optimal order error estimates in both $H^1$ and $L^2$
norms are proved in Section 4. The paper is concluded with some
numerical experiments to illustrate the theoretical analysis in
Section 5.

\section{The Weak Galerkin Method}
In this section we design a continuous time and a discontinuous time
weak Galerkin finite element scheme for the initial-boundary value
problem (\ref{continuousP}). We consider the space of discrete weak
functions and the discrete weak operator introduced in \cite{JW_WG}.
Let $\mathcal{T}_h$ be a quasiuniform family of triangulations of a
plane domain $\Omega$ and $T$ be each triangle element with
$\partial T$ as its boundary. For each $T \in \mathcal{T}_h$, let
$P_j(T)$ be the set of polynomials on $T$ with degree no more than
$j$, and $P_l(\partial T)$ be the set of polynomials on $\partial T$
with degree no more than $l$, respectively. Let $\hat{P}_j(T)$ be
the set of homogeneous polynomials on $T$ with degree $j$. The weak
finite element space $S_h(j,l)$ is defined by
$$
S_h(j,l):=\{v=\{v_0,v_b\}\,:\,\,v_0\in P_j(T),\,v_b\in
P_l(e)\,\mbox{ for all edge } e\subset \partial T,\ T\in
\mathcal{T}_h\}.
$$
Denote by $S^0_h(j,l)$ the subspace of $S_h(j,l)$ with vanishing
boundary value on $\partial \Omega$; i.e.,
$$
S^0_h(j,l):=\{v=\{v_0,v_b\}\in S_h(j,l),\,v_b|_{\partial T\cap\partial \Omega}=0\,\mbox{ for all } T\in \mathcal{T}_h\}.
$$

Let $\sum_h=\{ {\bf q}\in [L^2(\Omega)]^2\,:\,{\bf q}|_T\in V(T,r)
\mbox{ for all } T\in \mathcal{T}_h\}$, where $V(T,r)$ is a subspace
of the set of vector-valued polynomials of degree no more than $r$
on $T$. For each $v=\{v_0,v_b\}\in S_h(j,l)$, the discrete weak
gradient $\nabla_d v\in \sum_h$ of $v$ on each element $T$ is given
by the following equation:
\begin{equation}\label{WGoperator}
\int_T \nabla_{d}v\cdot {\bf q}\,dT=-\int_T v_0\nabla\cdot {\bf
q}\,dT+\int_{\partial T} v_b{\bf q}\cdot \bn\,ds,\quad \forall {\bf
q}\in V(T,r),
\end{equation}
where $\bn$ is the outward normal direction of $\partial T$. It is
easy to see that this discrete weak gradient is well-defined.

To investigate the approximation properties of the discrete weak
spaces $S_h(j,l)$ and $\sum_h$, we use three projections in this
paper. The first two are local projections defined on each triangle
$T$: one is $Q_h u=\{Q_0 u,Q_b u\}$, the $L^2$ projection of
$H^1(T)$ onto $P_j(T)\times P_l(\partial T)$ and another is $\bRh$,
the $L^2$ projection of $[L^2(T)]^2$ onto $V(T,r)$. The third
projection $\Pi_h$ is assumed to exist and satisfy the following
property: For ${\bf q}\in H(div,\Omega)$ with mildly added
regularity, $\Pi_h {\bf q}\in H(div,\Omega)$ such that $\Pi_h {\bf
q}\in V(T,r)$ on each $T\in \mathcal{T}_h$, and
$$
(\nabla \cdot {\bf q},v_0)_T=(\nabla \cdot \Pi_h {\bf
q},v_0)_T,\quad \forall v_0\in P_j(T).
$$

It is easy to see the following two useful identities:
\begin{equation}\label{identity1}
\nabla_d(Q_h w)=\bRh(\nabla w),\quad \forall w\in H^1(T),
\end{equation}
and for any ${\bf q}\in H(div,\Omega)$
\begin{equation}\label{identity2}
\sum_{T\in\mathcal{T}_h}(-\nabla\cdot{\bf q},v_0)_T=
\sum_{T\in\mathcal{T}_h}(\Pi_h{\bf q},\nabla_d v)_T, \quad\forall
v=\{v_0,v_b\}\in S_h^0(j,l).
\end{equation}

The discrete weak spaces $S_h(j,l)$ and $\sum_h$ need to possess
some good approximation properties in order to provide an acceptable
finite element scheme. In \cite{JW_WG}, the following two criteria
were given as a general guideline for their construction:

\begin{description}
\item[\bf P1] For any $v\in S_h(j,l)$, if $\nabla_d
v=0$ on $T$, then one must have $v\equiv {\it constant}$ on $T$;
i.e., $v_0=v_b={\it constant}$ on $T$.
\item[\bf P2] For any $u\in H^1_0(\Omega)\cap
H^{m+1}(\Omega)$, where $0\leq m\leq j+1$, the discrete weak
gradient of the $L^2$ projection $Q_h u$ of $u$ in the discrete weak
space $S_h(j,l)$ provides a good approximation of $\nabla u$; i.e.,
$\|\nabla_d (Q_h u)-\nabla u\|\leq Ch^m\|u\|_{m+1}$ holds true.
\end{description}

\medskip
Examples of $S_h(j,l)$ and $\sum_h$ satisfying the conditions {\bf
P1} and {\bf P2} can be found in \cite{JW_WG}. For example, with
$V(T,r=j+1)=[P_{j}(T)]^2+\hat{P}_j(T){\bf x}$ being the usual
Raviart-Thomas element \cite{RT} of order $j$, one may take
equal-order elements of order $j$ for $S_h(j,l)$ in the interior and
the boundary of each element $T$. The key of using the
Raviart-Thomas element for $V(T,r)$ is to ensure the condition {\bf
P1}. The condition {\bf P2} was satisfied by the commutative
property (\ref{identity1}) which has been established in
\cite{JW_WG}. It was shown later in \cite{wy-mixed, JW2} that the
condition {\bf P1} can be circumvented by a suitable stabilization
term. Consequently, the selection of $V(T,r)$ and $S_h(j,l)$ becomes
very flexible and robust in practical computation. It even allows
the use of finite elements of arbitrary shape.

The main idea of the weak Galerkin method is to use the discrete
weak space $S_h(j,l)$ as testing and trial spaces and replace the
classical gradient operator by the weak gradient operator $\nabla_d$
in (\ref{weakform}).

First, we pose the continuous time weak Galerkin finite element
method, based on the weak Galerkin operator (\ref{WGoperator}) and
weak formulation (\ref{weakform}), which is to find
$u_h(t)=\{u_0(\cdot, t),u_b(\cdot, t)\}$, belonging to $S_h(j,l)$
for $t\geq 0$, satisfying $u_b=Q_b g \mbox{ on } \partial \Omega,
t>0$ and $u_h(0)=Q_h \psi \mbox{ in } \Omega$, and the following
equation
\begin{equation}\label{semiWG}
((u_{h})_t,v_0)+a(u_h,v)=(f,v_0)\quad \forall v=\{v_0,v_b\}\in
S_h^0(j,l),\,t>0,
\end{equation}
where $a(\cdot,\cdot)$ is the weak bilinear form defined by
$$
a(w,v)=(a\nabla_d w,\nabla_d v),
$$
which is assumed to be bounded and coercive, i.e., for constant $\alpha,\beta,\gamma>0$
$$
|a(u,v)|\leq \beta \|\nabla_d u\|\|\nabla_d v\|,
$$
$$
a(u,u)\geq \alpha \|\nabla_d u\|^2,
$$
and that
$$
|(a_t \nabla_d u,\nabla_d v)|\leq \gamma \|\nabla_d u\|\|\nabla_d v\|.
$$

We now turn our attention to some discrete time Weak Galerkin
procedures. We introduce a time step $k$ and the time levels
$t=t_n=nk$, where $n$ is a nonnegative integer, and denote by
$U^n=U^n_h\in S_h(j,l)$ the approximation of $u(t_n)$ to be
determined. The backward Euler Weak Galerkin method is defined by
replacing the time derivative in (\ref{semiWG}) by a backward
difference quotient, or, if $\bar{\partial} U^n=(U^n-U^{n-1})/k$,
\begin{equation}\label{discreteWG}
(\bar{\partial} U^n,v_0)+a(U^n,v)=(f(t_n),v_0)\quad \forall
v=\{v_0,v_b\}\in S_h^0(j,l),\,n\geq 1,\,U^0=Q_h\psi,
\end{equation}
i.e.,
$$
(U^n,v_0)+ka(U^n,v)=(U^{n-1}+kf(t_n),v_0), \quad \forall
v=\{v_0,v_b\}\in S_h^0(j,l).
$$

\section{Energy Conservation of Weak Galerkin}

This section investigates the energy conservation property of the
weak Galerkin finite element approximation $u_h$. The increase in
internal energy in a small spatial region of the material $K$, i.e.,
control volume, over the time period $[t-\Delta t,t+\Delta t]$ is
given by
$$
\int_K u(x,t+\Delta t)\,dx-\int_K u(x,t-\Delta t)\,dx=\int_{t-\Delta t}^{t+\Delta t}\int_K u_t\,dx\,dt.
$$
  Suppose that a body obeys the heat equation and, in addition, generates its own heat per unit volume at a rate given by a known function $f$ varying in space and time, the change in internal energy in $K$ is accounted for by the flux of heat across the boundaries together with the source energy. By Fourier's law, this is
$$
-\int_{t-\Delta t}^{t+\Delta t}\int_{\partial K} q\cdot \bn \,ds
\,dt +\int_{t-\Delta t}^{t+\Delta t}\int_K f\,dx\,dt,
$$
where $q=-a\nabla u$ is the flow rate of heat energy.
The solution $u$ of the problem (\ref{continuousP}) yields the following integral form of energy conservation:
\begin{equation}\label{energy}
\int_{t-\Delta t}^{t+\Delta t}\int_K u_t\,dx\,dt+\int_{t-\Delta
t}^{t+\Delta t}\int_{\partial K} q\cdot \bn\,ds\,dt=\int_{t-\Delta
t}^{t+\Delta t}\int_K f\,dx\,dt
\end{equation}
where the Green's formula was used. We claim that the numerical approximation from the weak Galerkin finitel element method for (\ref{continuousP}) retains the energy conservation property (\ref{energy}).

In (\ref{semiWG}), we chose a test function $v=\{v_0,v_b=0\}$ so
that $v_0=1$ on $K$ and $v_0=0$ elsewhere. After integrating over
the time period, we have
\begin{equation}\label{EC}
\int_{t-\Delta t}^{t+\Delta t}\int_K u_t\,dx\,dt+\int_{t-\Delta t}^{t+\Delta t}\int_{ K} a \nabla_d u\nabla_d v \,dx\,dt=\int_{t-\Delta t}^{t+\Delta t}\int_K f\,dx\,dt.
\end{equation}
Using the definition of operator $\bRh$ and of the weak gradient
$\nabla_d$ in (\ref{WGoperator}), we arrive at
$$
\int_{ K} a \nabla_d u\nabla_d v \,dx=\int_K\bRh(a\nabla_d u_h)\cdot
\nabla_d v\, dx=-\int_K\nabla\cdot \bRh(a\nabla_d
u_h)\,dx=-\int_{\partial K}\bRh(a\nabla_du_h)\cdot \bn\, ds.
$$

Then by substituting in (\ref{EC}), we have the energy conservation property
$$
\int_{t-\Delta t}^{t+\Delta t}\int_K u_t\,dx\,dt+\int_{t-\Delta
t}^{t+\Delta t} \int_{\partial K}-\bRh(a\nabla_du_h)\cdot \bn\,
ds\,dt=\int_{t-\Delta t}^{t+\Delta t}\int_K f\,dx\,dt,
$$
which provides a numerical flux given by
$$
q_h\cdot \bn=-\bRh(a\nabla_du_h)\cdot \bn.
$$
The numerical flux $q_h\cdot n$ is continuous across the edge of each element $T$, which can be verified by a selection of the test function $v=\{v_0,v_b\}$ so that $v_0\equiv0$ and $v_b$ arbitrary.

\section{Error Analysis}

In this section, we derive some error estimates for both continuous
and discrete time weak Galerkin finite element methods. The
difference between the weak Galerkin finite element approximation
$u_h$ and the $L^2$ projection $Q_h u$ of the exact solution $u$ is
measured. We first state a result concerning an approximation
property as follows.

\begin{lemma}\label{approx}
For $u\in H^{1+r}(\Omega)$ with $r>0$, we have
$$
\|\Pi_h(a\nabla u)-a \bRh(\nabla u)\|\leq Ch^r\|u\|_{1+r}.
$$
\end{lemma}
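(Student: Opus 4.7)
The natural plan is to split the quantity via the triangle inequality by inserting the ``exact'' vector field $a\nabla u$ as an intermediate:
\begin{equation*}
\|\Pi_h(a\nabla u) - a\bRh(\nabla u)\|
\leq \|\Pi_h(a\nabla u) - a\nabla u\| + \|a\nabla u - a\bRh(\nabla u)\|.
\end{equation*}
The two pieces are now standard projection-error quantities of the same order, each estimated by a different tool.

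For the first term I would invoke the approximation property of $\Pi_h$. Since $\Pi_h$ is the Raviart--Thomas-type projection tied to $V(T,r)$ (this is the projection whose divergence-commutativity with the $L^2$ projection $Q_0$ onto $P_j(T)$ was stated in the assumption $(\nabla\cdot\mathbf q,v_0)_T=(\nabla\cdot\Pi_h\mathbf q,v_0)_T$), the companion $L^2$-approximation bound $\|\Pi_h\mathbf q-\mathbf q\|\le Ch^{r}\|\mathbf q\|_{r}$ holds for $0<r\le j+1$ on each $T$, and then summed over $\mathcal T_h$ by the quasiuniformity of the mesh. Taking $\mathbf q=a\nabla u$ and using the boundedness (and enough smoothness in $x$) of $a$, I would conclude $\|\Pi_h(a\nabla u)-a\nabla u\|\le Ch^{r}\|a\nabla u\|_{r}\le Ch^{r}\|u\|_{1+r}$.

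For the second term I would factor out $a$ and use that $\bRh$ is the $L^2$ projection onto $V(T,r)$, which contains $[P_j(T)]^2$ (recall $V(T,r)=[P_j(T)]^2+\hat P_j(T)\mathbf x$ in the canonical example). Hence $\|\nabla u-\bRh(\nabla u)\|\le Ch^{r}\|\nabla u\|_{r}\le Ch^{r}\|u\|_{1+r}$ by a standard Bramble--Hilbert argument on each element, and an $L^\infty$ bound on $a$ gives $\|a\nabla u-a\bRh(\nabla u)\|\le\|a\|_{\infty}\,Ch^{r}\|u\|_{1+r}$. Combining the two pieces proves the lemma.

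The only genuine obstacle is bookkeeping the regularity: the bound on $\|\Pi_h(a\nabla u)-a\nabla u\|$ needs $a\nabla u\in H^{r}$, which is why one tacitly needs $a$ to be smooth enough (say $W^{r,\infty}$) so that multiplication by $a$ preserves $H^{r}$-regularity and the norm $\|a\nabla u\|_{r}$ is controlled by $\|u\|_{1+r}$ up to a constant depending only on $a$. Once that is granted, both branches of the triangle inequality deliver the same rate $h^{r}\|u\|_{1+r}$, and no further manipulation is required.
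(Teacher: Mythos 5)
Your proposal is correct and follows essentially the same route as the paper: both split $\|\Pi_h(a\nabla u)-a\bRh(\nabla u)\|$ by the triangle inequality with $a\nabla u$ inserted, bound the first piece by the approximation property of $\Pi_h$, and bound the second by the approximation property of $\bRh(\nabla u)$ (the paper phrases this via the identity $\bRh(\nabla u)=\nabla_d(Q_h u)$ and condition {\bf P2}, which is the same estimate). Your remark that one tacitly needs $a$ smooth enough for $\|a\nabla u\|_r\le C\|u\|_{1+r}$ is a fair observation of a hypothesis the paper leaves implicit.
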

\begin{proof} Since from (\ref{identity1}) we have $\bRh(\nabla u)=\nabla_d(Q_h u)$, then
$$\|\Pi_h(a\nabla u)-a \bRh(\nabla u)\|=\|\Pi_h(a\nabla u)-a \nabla_d(Q_h u)\|.$$
Using the triangle inequality, the definition of $\Pi_h$ and the second condition {\bf P2} on the discrete weak space $S_h(j,l)$, we have
$$
\|\Pi_h(a\nabla u)-a \nabla_d(Q_h u)\|\leq\|\Pi_h(a\nabla u)-a \nabla u\|+\|a\nabla u-a \nabla_d(Q_h u)\|
\leq Ch^r\|u\|_{1+r}.
$$
\end{proof}

\subsection{Continuous Time Weak Galerkin Finite Element Method}

Our aim is to prove the following estimate for the error for the
semidiscrete solution.
\newtheorem{theorem}{Theorem}[section]
\begin{theorem}\label{theorem1}
Let $u\in H^{1+r}(\Omega)$ and $u_h$ be the solutions of
(\ref{continuousP}) and (\ref{semiWG}), respectively. Denote by
$e:=u_h-Q_h u$ the difference between the weak Galerkin
approximation and the $L^2$ projection of the exact solution $u$.
Then there exists a constant $C$ such that
$$
\|e(\cdot,t)\|^2+\int_0^t \alpha\|\nabla_d e\|^2\,ds
\leq \|e(\cdot, 0)\|^2+ Ch^{2r}\int_0^t \|u\|^2_{1+r}\,ds,
$$
and
 \begin{eqnarray*}
& &\int_0^t \|e_t\|^2\,ds+\frac{\alpha}{4}\|\nabla_d e(\cdot,t)\|^2 +
(1+\frac{\gamma}{2\alpha})\|e\|^2\\
&\leq& {\alpha}\|\nabla_d e(\cdot,0)\|^2 +
(1+\frac{\gamma}{2\alpha})\|e(\cdot,0)\|^2 \\
& &  + C h^{2r}\left(\|u(\cdot,0)\|^2_{1+r}+\|u\|^2_{1+r}+ \int_0^t
\|u\|^2_{1+r}\,ds+\int_0^t \|u_t\|^2_{1+r}\,ds\right).
\end{eqnarray*}
\end{theorem}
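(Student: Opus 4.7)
The plan is to collapse (\ref{semiWG}) and the PDE into a single error equation for $e := u_h - Q_h u$, then test with $v = e$ for the first inequality and $v = e_t$ for the second. To obtain a form of the continuous PDE compatible with (\ref{semiWG}), I will multiply $u_t - \nabla\cdot(a\nabla u) = f$ by $v_0$, sum over $T\in\T_h$, and apply identity (\ref{identity2}) with ${\bf q} = a\nabla u$, giving
\[
(u_t, v_0) + (\Pi_h(a\nabla u), \nabla_d v) = (f, v_0) \quad \forall\, v = \{v_0,v_b\}\in S_h^0(j,l).
\]
Because $(u_t,v_0) = ((Q_h u)_t, v_0)$ for $v_0\in P_j(T)$, subtracting from (\ref{semiWG}) and using (\ref{identity1}) produces the error equation
\[
(e_t, v_0) + a(e, v) = (\rho, \nabla_d v), \qquad \rho := \Pi_h(a\nabla u) - a\,\bRh(\nabla u),
\]
with $\|\rho\|\leq Ch^r\|u\|_{1+r}$ by Lemma \ref{approx}. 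Note that $e\in S_h^0$ since both $u_h$ and $Q_h u$ carry the boundary value $Q_b g$.

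For the first bound, I test with $v = e$ and use coercivity together with Young's inequality (splitting the $\alpha\|\nabla_d e\|^2$ in half) to obtain $\tfrac{d}{dt}\|e\|^2 + \alpha\|\nabla_d e\|^2 \leq Ch^{2r}\|u\|^2_{1+r}$; integrating in $t$ delivers the first displayed inequality.

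For the second bound I test with $v = e_t \in S_h^0$ (admissible since the weak gradient commutes with $\partial_t$), write $a(e,e_t) = \tfrac12\tfrac{d}{dt}a(e,e) - \tfrac12(a_t\nabla_d e,\nabla_d e)$, integrate over $[0,t]$, and integrate by parts in time on the right to transfer $\partial_t$ from $e$ to $\rho$, obtaining
\[
\int_0^t\|e_t\|^2\,ds + \tfrac12 a(e(t),e(t)) = \tfrac12 a(e(0),e(0)) + \tfrac12\int_0^t (a_t\nabla_d e,\nabla_d e)\,ds + (\rho(t),\nabla_d e(t)) - (\rho(0),\nabla_d e(0)) - \int_0^t (\rho_t, \nabla_d e)\,ds.
\]
Coercivity keeps $\tfrac{\alpha}{2}\|\nabla_d e(t)\|^2$ on the left; Young's inequality on $(\rho(t),\nabla_d e(t))$ steals back $\tfrac{\alpha}{4}\|\nabla_d e(t)\|^2$, leaving the stated $\tfrac{\alpha}{4}\|\nabla_d e(t)\|^2$ term. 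The $a_t$-term is bounded by $\tfrac{\gamma}{2}\int_0^t\|\nabla_d e\|^2\,ds$, while the estimates $\|\rho\| \leq Ch^r\|u\|_{1+r}$ and $\|\rho_t\|\leq Ch^r(\|u\|_{1+r}+\|u_t\|_{1+r})$ — the latter via the product rule $(a\nabla u)_t = a_t\nabla u + a\nabla u_t$ combined with Lemma \ref{approx} — produce precisely the $\|u(\cdot,0)\|^2_{1+r}$, $\|u(\cdot,t)\|^2_{1+r}$, $\int_0^t\|u\|^2_{1+r}\,ds$, and $\int_0^t\|u_t\|^2_{1+r}\,ds$ terms in the statement.

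The leftover $\tfrac{\gamma}{2}\int_0^t\|\nabla_d e\|^2\,ds$ is eliminated by substituting the first estimate, which rewrites it as $\tfrac{\gamma}{2\alpha}(\|e(0)\|^2 + Ch^{2r}\int_0^t\|u\|^2_{1+r}\,ds - \|e(t)\|^2)$; the negative $\|e(t)\|^2$ piece is then moved to the left, and adding the full first estimate once more promotes the coefficient of $\|e(t)\|^2$ there from $\tfrac{\gamma}{2\alpha}$ to $1+\tfrac{\gamma}{2\alpha}$, matching the stated form. The main obstacle is the time integration by parts: $\nabla_d e_t$ cannot be controlled directly, so moving $\partial_t$ onto $\rho$ is forced, and this is exactly what introduces the $\int_0^t\|u_t\|^2_{1+r}\,ds$ term on the right. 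The remainder is careful bookkeeping of the constants so that the left-hand side collapses to the stated weights $\tfrac{\alpha}{4}$ and $1+\tfrac{\gamma}{2\alpha}$.
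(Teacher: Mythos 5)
Your proposal is correct and takes essentially the same approach as the paper: the identical error equation $(e_t,v_0)+a(e,v)=(\Pi_h(a\nabla u)-a\mathbb{Q}_h(\nabla u),\nabla_d v)$, testing with $v=e$ and then $v=e_t$, transferring the time derivative from $\nabla_d e$ onto the consistency term via the product rule (splitting its time derivative into the $a\nabla u_t$ and $a_t\nabla u$ pieces, each bounded by Lemma \ref{approx}), and absorbing the leftover $\int_0^t\|\nabla_d e\|^2\,ds$ by adding a suitable multiple of the first estimate to reach the weights $\tfrac{\alpha}{4}$ and $1+\tfrac{\gamma}{2\alpha}$. The only cosmetic difference is that you integrate in time before integrating by parts, whereas the paper uses the pointwise identity $\tfrac{d}{dt}(\cdot,\nabla_d e)-(\cdot_t,\nabla_d e)$ and integrates afterward; the bookkeeping is the same.
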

\begin{proof}
Let $v=\{v_0,v_b\}\in S^0_h(j,l)$ be the testing function. By
testing (\ref{heat}) against $v_0$, together with $\bRh(\nabla
u)=\nabla_d(Q_h u)$ for $u\in H^1$ and $(Q_0u_t,v_0)=(u_t,v_0)$, we
obtain
\begin{eqnarray*}
(f,v_0)&=&(u_t,v_0)+\sum_{T\in \T_h}(-\nabla\cdot(a\nabla u),v_0)_T\\
&=&(u_t,v_0)+(\Pi_h(a\nabla u),\nabla_d v)\\
&=& (Q_h u_t,v_0)+(\Pi_h(a\nabla u)-a \bRh(\nabla u),\nabla_d
v)+(a\nabla_d(Q_h u),\nabla_d v)
\end{eqnarray*}

Since the numerical solution also satisfies the heat equation, we have
$$
(f,v_0)=((u_{h})_{t}, v_0) + a(u_h,v).
$$
Combining the above two equations we obtain
\begin{equation}\label{errorEQ}
((u_h-Q_h u)_t, v_0)+ a(u_h-Q_h u,v)=(\Pi_h (a\nabla u)-a \bRh
(\nabla u),\nabla_d v),
\end{equation}
which shall be called the error equation for the WG for the heat
equation.

Let $e=u_h-Q_h u$ be the error. Use $v=e$ in the error equation, we obtain
$$
(e_t,e)+a(e,e)=(\Pi_h(a\nabla u)-a \bRh(\nabla u),\nabla_d e).
$$
By Cauchy-Schwarz inequality and the coercivity of the bilinear form, we have
$$
\frac12\frac{d}{dt}\|e\|^2+\alpha\|\nabla_d e\|^2\leq
\frac1{2\alpha}\|\Pi_h(a\nabla u)-a \bRh(\nabla
u)\|^2+\frac{\alpha}{2}\|\nabla_d e\|^2.
$$
It follows that
$$
\frac{d}{dt}\|e\|^2+\alpha\|\nabla_d e\|^2\leq
\frac1{\alpha}\|\Pi_h(a\nabla u)-a \bRh(\nabla u)\|^2,
$$
and hence, after integration,
\begin{equation}\label{errorL2}
\|e\|^2+\int_0^t \alpha\|\nabla_d e\|^2\,ds\leq \|e(\cdot,0)\|^2+
\frac1{\alpha}\int_0^t \|\Pi_h(a\nabla u)-a \bRh(\nabla u)\|^2\, dt.
\end{equation}
Then by Lemma \ref{approx}, we have the assertion.

In order to estimate $\nabla_d e$, we use the error equation with
$v=(u_h-Q_h u)_t=e_t$. We obtain
\begin{eqnarray*}
(e_t,e_t)+a(e,e_t)&=&(\Pi_h(a\nabla u)-a \bRh(\nabla u),\nabla_d e_t)\\
&=&\frac{d}{dt}(\Pi_h(a\nabla u)-a \bRh(\nabla u),\nabla_d
e)\\
& & -(\Pi_h(a\nabla u_t)-a \bRh(\nabla u_t),\nabla_d
e)\\
& & -(\Pi_h(a_t\nabla u)-a_t \bRh(\nabla u),\nabla_d e).
\end{eqnarray*}
By the Cauchy-Schwarz inequality, this shows that
\begin{eqnarray*}
\|e_t\|^2+\frac{1}{2}(\frac{d}{dt}a( e,e)-(a_t\nabla_d e,\nabla_d e))&\leq& \frac{d}{dt}(\Pi_h(a\nabla u)-a \bRh(\nabla u),\nabla_d e)\\
&+&\frac1{2\alpha} \|\Pi_h(a\nabla u_t)-a \bRh(\nabla u_t)\|^2+\frac{\alpha}{2} \|\nabla_d e\|^2\\
&+&\frac1{2\alpha} \|\Pi_h(a_t\nabla u)-a_t \bRh(\nabla
u)\|^2+\frac{\alpha}{2} \|\nabla_d e\|^2,
\end{eqnarray*}
i.e.,
\begin{eqnarray*}
\|e_t\|^2+\frac{1}{2}\frac{d}{dt}a( e,e)&\leq& \frac12(a_t\nabla_d e,\nabla_d e)+ \frac{d}{dt}(\Pi_h(a\nabla u)-a \bRh(\nabla u),\nabla_d e)\\ &+&\frac1{2\alpha} \|\Pi_h(a\nabla u_t)-a \bRh(\nabla u_t)\|^2+\frac{\alpha}{2} \|\nabla_d e\|^2\\
&+&\frac1{2\alpha} \|\Pi_h(a_t\nabla u)-a_t \bRh(\nabla
u)\|^2+\frac{\alpha}{2} \|\nabla_d e\|^2.
\end{eqnarray*}
Thus, integrating with respect to $t$ and together with the
coercivity and boundedness yields
\begin{eqnarray*}
& &\int_0^t \|e_t\|^2\,ds+\frac{\alpha}{2}\|\nabla_d e(\cdot,t)\|^2\\
&\leq& \frac{{\beta}}{2}\|\nabla_d e(\cdot,0)\|^2
 + (\Pi_h(a\nabla u(\cdot,t)-a \bRh(\nabla u(\cdot,t)),\nabla_d
 e(\cdot,t))\\
&-&(\Pi_h(a\nabla u(\cdot,0)-a \bRh(\nabla u(\cdot,0)),\nabla_d
e(\cdot,0))+\frac1{2\alpha}\int_0^t \|\Pi_h(a\nabla u_t)-a
\bRh(\nabla u_t)\|^2\,ds\\
&+&\frac1{2\alpha}\int_0^t \|\Pi_h(a_t\nabla u)-a_t \bRh(\nabla
u)\|^2\,ds+(\alpha+\frac{\gamma}{2}) \int_0^t\|\nabla_d e\|^2\,ds.
\end{eqnarray*}

By adding $(\alpha+\gamma/2)/\alpha=1+\frac{\gamma}{2\alpha}$ times inequality (\ref{errorL2}) to the above inequality, we arrive at
 \begin{eqnarray*}
& &\int_0^t \|e_t\|^2\,ds+\frac{\alpha}{2}\|\nabla_d e(\cdot,t)\|^2 +(1+\frac{\gamma}{2\alpha}) \|e\|^2\\
&\leq& \frac{{\beta}}{2}\|\nabla_d e(\cdot,0)\|^2 + (1+\frac{\gamma}{2\alpha})\|e(\cdot,0)\|^2\\
&+&(\Pi_h(a\nabla u(\cdot,t))-a \bRh(\nabla u(\cdot,t)),\nabla_d
e(\cdot,t))
{-}(\Pi_h(a\nabla u(\cdot,0))-a \bRh(\nabla u(\cdot,0)),\nabla_d e(\cdot,0)) \\
&+&\frac1{2\alpha}\int_0^t \|\Pi_h(a\nabla u_t)-a \bRh(\nabla
u_t)\|^2\,ds
+\frac1{2\alpha}\int_0^t \|\Pi_h(a_t\nabla u)-a_t \bRh(\nabla u)\|^2\,ds\\
&+&(\frac1{\alpha}+\frac{\gamma}{2\alpha^2})\int_0^t \|\Pi_h(a\nabla
u)-a \bRh(\nabla u)\|^2\, dt.
\end{eqnarray*}

Next, we use the Cauchy-Schwarz inequality to obtain
 \begin{eqnarray*}
& &\int_0^t \|e_t\|^2\,ds+\frac{\alpha}{4}\|\nabla_d e(\cdot,t)\|^2 + (1+\frac{\gamma}{2\alpha})\|e\|^2\\
&\leq& {{\beta}}\|\nabla_d e(\cdot,0)\|^2 + (1+\frac{\gamma}{2\alpha})\|e(\cdot,0)\|^2\\
&+&\frac1\alpha\|\Pi_h(a\nabla u(\cdot,t))-a \bRh(\nabla
u(\cdot,t))\|^2
+\frac1{2\alpha}\|\Pi_h(a\nabla u(\cdot,0))-a \bRh(\nabla u(\cdot,0))\|^2 \\
&+&\frac1{2\alpha}\int_0^t \|\Pi_h(a\nabla u_t)-a \bRh(\nabla
u_t)\|^2\,ds
+\frac1{2\alpha}\int_0^t \|\Pi_h(a_t\nabla u)-a_t \bRh(\nabla u)\|^2\,ds\\
&+&(\frac1{\alpha}+\frac{\gamma}{2\alpha^2})\int_0^t \|\Pi_h(a\nabla
u)-a \bRh(\nabla u)\|^2\, dt .
\end{eqnarray*}
Then by Lemma \ref{approx}, the proof is completed.
\end{proof}

\subsection{Discrete Time Weak Galerkin Finite Element Method}

We begin with the following lemma which provides a Poincar\'e-type
inequality with the weak gradient operator.

\begin{lemma}\label{poincare}
Assume that {$\phi=\{\phi_0,\phi_b\}\in S^0_h (j,j)$}, then there
exists a constant $C$ such that
$$
\|\phi\|\leq C \|\nabla_d \phi\|,
$$
where $\nabla_d \phi\in V(T,r=j+1)=[P_{j}(T)]^2+\hat{P}_j(T){\bf x}$.
\end{lemma}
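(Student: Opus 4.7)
The plan is to establish the stated inequality by a duality argument of Aubin--Nitsche type, where the norm $\|\phi\|$ on the left is understood as $\|\phi_0\|$, the $L^2$ norm of the interior component (the only genuinely $L^2$ piece of the pair $\{\phi_0,\phi_b\}$).

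First I would introduce the auxiliary elliptic problem: find $w\in H_0^1(\Omega)$ with $-\Delta w=\phi_0$ in $\Omega$. Assuming enough regularity of $\Omega$ (for a convex polygonal domain this gives the full $H^2$ estimate $\|w\|_2\le C\|\phi_0\|$; for a general Lipschitz polygon one has $\|w\|_{1+s}\le C\|\phi_0\|$ for some $s>1/2$, which will suffice). Testing the equation against $\phi_0$ gives
$$\|\phi_0\|^2=\sum_{T\in\mathcal{T}_h}(-\nabla\cdot\nabla w,\phi_0)_T.$$

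Next I would apply identity (\ref{identity2}) with $\mathbf{q}=\nabla w$ and test function $v=\phi\in S_h^0(j,j)$ to convert the right-hand side into a weak-gradient pairing, followed by Cauchy--Schwarz:
$$\|\phi_0\|^2=\sum_{T\in\mathcal{T}_h}(\Pi_h(\nabla w),\nabla_d\phi)_T\le \|\Pi_h(\nabla w)\|\,\|\nabla_d\phi\|.$$
This is precisely where the boundary condition $\phi_b|_{\partial\Omega}=0$ is used: identity (\ref{identity2}) was derived by summing element-wise integration by parts and exploiting both the $H(\mathrm{div})$-continuity of $\Pi_h(\nabla w)$ and the single-valuedness of $\phi_b$ across interior edges, with the remaining boundary contributions vanishing because $\phi\in S_h^0$.

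To finish, I would invoke the standard stability $\|\Pi_h\mathbf{q}\|\le C\|\mathbf{q}\|_1$ for the Raviart--Thomas projection, which together with the regularity estimate gives $\|\Pi_h(\nabla w)\|\le C\|w\|_2\le C\|\phi_0\|$. Substituting and dividing through by $\|\phi_0\|$ delivers $\|\phi_0\|\le C\|\nabla_d\phi\|$. The main obstacle I anticipate is precisely this regularity point: since $\Pi_h$ is not bounded on $L^2$ alone, one really needs $H^{1+s}$ regularity of $w$ so that $\Pi_h(\nabla w)$ is well-defined; on a general Lipschitz polygonal $\Omega$ this is available with $s>1/2$, and the corresponding stability $\|\Pi_h\mathbf{q}\|\le C\|\mathbf{q}\|_s$ carries the argument through.
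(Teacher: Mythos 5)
Your argument is correct in substance, but it takes a genuinely different route from the paper. The paper's proof is constructive and regularity-free: it builds a conforming interpolant $\phi_I\in H^1_0(\Omega)$ from the cell averages of $\phi_0$, applies the classical Poincar\'e inequality to $\phi_I$, bounds $\|\phi-\phi_I\|$ and $\|\nabla\phi_I\|$ by the broken gradient $\|\nabla\phi_0\|_T$ plus edge jumps $h^{-1}\|\phi_0-\phi_b\|^2_{\partial T}$, and finally controls these quantities by $\|\nabla_d\phi\|$ through a specially chosen test field ${\bf q}\in V(T,j+1)$ and a norm-equivalence result quoted from a companion Stokes paper. You instead run an Aubin--Nitsche duality: solve $-\Delta w=\phi_0$, use the commuting identity (\ref{identity2}) with ${\bf q}=\nabla w$ and $v=\phi\in S^0_h(j,j)$ to write $\|\phi_0\|^2=\sum_T(\Pi_h(\nabla w),\nabla_d\phi)_T$, and then invoke stability of the Raviart--Thomas projection plus elliptic regularity. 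This is shorter and avoids the interpolant/jump machinery, and you correctly identify both where the boundary condition $\phi_b|_{\partial\Omega}=0$ enters and that $\Pi_h$ needs more than $L^2$ data, so that $H^{1+s}$ regularity with $s>1/2$ (available on any Lipschitz polygon) is genuinely required; the price is that your proof depends on dual-problem regularity, while the paper's does not, and the paper's route also produces the intermediate bound (\ref{bound1}) in the broken norm, which has independent interest. One small imprecision to tidy up: for fractional $s\in(1/2,1)$ the Raviart--Thomas projection is not bounded by $\|{\bf q}\|_s$ alone; the standard estimate has the form $\|\Pi_h{\bf q}\|\le C\left(\|{\bf q}\|_s+\|\nabla\cdot{\bf q}\|\right)$ (or with an $h^s$ factor on the divergence term). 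This is harmless here because $\nabla\cdot(\nabla w)=-\phi_0$, so the extra term is exactly $\|\phi_0\|$ and the kick-back still closes the argument, but you should state the stability estimate in that form.
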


\begin{proof}
Let $\bar{\phi}_0$ be a piecewise constant function with the cell
average of $\phi$ on each element $T$. Let $\phi_I\in H^1_0(\Omega)$
be a continuous piecewise polynomial with vanishing boundary value
lifted from $\phi$ as follows. Let $G_j(T)$ be the set of all
Lagrangian nodal points for $P_{j+1}(T)$. At all internal Lagrangian
nodal points $x_i\in G_j(T)$, we set $\phi_I(x_i)=\bar{\phi}_0$. At
boundary Lagrangian points $x_i\in G_j(T)\cap\partial T$, we let
$\phi_I(x_i)$ be the trace of $\bar{\phi}_0$ from either side of the
boundary. At global Lagrangian points $x_i\in
\partial T\cap\partial \Omega$, we set $\phi_I(x_i)=0$. Let
$\jump{\phi_0}_e$ denote the jump of $\phi_0$ on the edge $e$; i.e.,
\begin{equation}\label{jw.05}
\jump{\phi_0}_e=\phi_0|_{T_1}-\phi_0|_{T_2}
=(\phi_0|_{T_1}-\phi_b)-(\phi_0|_{T_2}-\phi_b).
\end{equation}
By the classical Poincar\'e inequality for $\phi_I$, we have
\begin{eqnarray}\label{jw.07}
\|\phi\|&\leq& \|\phi-\phi_I\|+\|\phi_I\|\\
&\leq&\left(\sum_T \|\phi-\phi_I\|^2_{T}\right)^{\frac12}+
C\|\nabla\phi_I\|.\nonumber
\end{eqnarray}

From Lemma 4.3 in \cite{JW_WG_stoke}, we have
\begin{equation}\label{jw.06}
\|\phi-\phi_I\|^2_T \leq \sum_{T'\in  \mathcal{T}(T)} h_{T'}^2
\|\nabla \phi_0\|_{T'}^2 +\sum_{e\in  \large{\varepsilon}(T)}h_e
\|\jump{\phi_0}\|_e^2,\quad \forall T\in \mathcal{T}_h,
\end{equation}
where $ \mathcal{T}(T)$ denotes the set of all triangles in
$\mathcal{T}$ having a nonempty intersect with $T$, including $T$
itself, and  $\large{\varepsilon}(T)$ denotes the set of all edges
having a nonempty intersection with $T$. Note the elementary fact
that
\begin{equation}\label{jw.02}
\|\nabla\phi_I\|^2\leq C\sum_T |\phi_I(x_i)-\phi_I(x_k)|^2,
\end{equation}
where $x_i$ and $x_k$ run through all the Lagrangian nodal points on
$T$. By construction, $\phi_I(x_i)$ is the cell average of $\phi_0$
on either the element $T$ or an adjacent element $T_*$ which shares
with $T$ a common edge or a vertex point. Thus,
$\phi_I(x_i)-\phi_I(x_k)$ is either zero or the difference of the
cell average of $\phi_0$ on two adjacent elements $T$ and $T_*$. For
the later case, assume that $x_e$ is a point shared by $T$ and
$T_*$. It is not hard to see that
$$
\left|\bar\phi_0|_T - \phi_0(x_e)\right|^2 \leq C \int_T
|\nabla\phi_0|^2 dx.
$$
Thus, we have
\begin{eqnarray*}\label{jw.01}
|\phi_I(x_i)-\phi_I(x_k)|^2&=&|\bar\phi_0|_T - \bar\phi_0|_{T_*}|^2\\
&=& \left|\bar\phi_0|_T-\phi_0|_T(x_e)+ \jump{\phi_0}(x_e) +
\phi_0|_{T_*}(x_e)- \bar\phi_0|_{T_*}\right|^2\nonumber\\
&\le& C\sum_{e\subset\partial T} h_e^{-1}  \|\jump{\phi_0}\|_e^2+C
\int_{T\cup T_*} |\nabla\phi_0|^2 dx.\nonumber
\end{eqnarray*}
Substituting the above estimate into (\ref{jw.02}) yields
\begin{equation}\label{jw.03}
\|\nabla\phi_I\|^2\leq C\sum_T \left(\|\nabla \phi_0\|_T^2+h^{-1}
\|\jump{\phi_0}\|_{\partial T}^2\right).
\end{equation}
By combining (\ref{jw.07}) with (\ref{jw.06}) and (\ref{jw.03}) we
obtain
\begin{equation}\label{bound1}
\|\phi\|^2\leq C\sum_T \left(\|\nabla \phi_0\|_T^2+h^{-1}
\|\phi_0-\phi_b\|_{\partial T}^2\right),
\end{equation}
where (\ref{jw.05}) has been applied to estimate the jump of
$\phi_0$ on each edge.

Next, we want to bound the two terms on the right hand side of
(\ref{bound1}) by $\|\nabla_d \phi\|$. Let us recall that the weak
gradient $\nabla_d\phi$ is defined by
$$
\int_T\nabla_d \phi\cdot {\bf q} dx=-\int_T\phi_0 \nabla\cdot {\bf
q} dx +\int_{\partial T} \phi_b {\bf q}\cdot \bn ds, \quad \forall
{\bf q}\in V(T,j+1),
$$
and by using integration by parts, we have
\begin{equation}\label{WG2}
\int_T\nabla_d \phi\cdot {\bf q} dx=\int_T\nabla\phi_0 \cdot {\bf q}
dx-\int_{\partial T} (\phi_0-\phi_b) {\bf q}\cdot \bn ds, \quad
\forall {\bf q}\in V(T,j+1).
\end{equation}
In order to bound $\|\nabla \phi_0\|$ and $ \|\jump{\phi_0}\|_e$ by
$\|\nabla_d \phi\|$, we let ${\bf q}$ in (\ref{WG2}) satisfy the
following
\begin{eqnarray*}
\int_T {\bf q}\cdot {\bf r} \,dx&=&\int_T \nabla \phi_0\cdot {\bf r}
\,dx\quad\forall {\bf r}\in [P_j(T)]^2,\\
\int_{\partial T} {\bf q}\cdot \bn \mu\,ds&=& -h^{-1}\int_{\partial
T} ( \phi_0-\phi_b)\mu\,ds\quad\forall \mu\in P_{j+1}[\partial T],
\end{eqnarray*}
where by Lemma 5.1 in \cite{stoke}, ${\bf q}$ and $\phi$ have the
norm equivalence of
\begin{equation}\label{norm}
\|{\bf q}\|_{L^2(T)}\approx\|\nabla
\phi_0\|_{L^2(T)}+h^{-\frac12}\|\phi_0-\phi_b\|_{L^2(\partial T)}.
\end{equation}

Then from (\ref{WG2}), we have
 \begin{eqnarray*}
 \int_T\nabla_d \phi\cdot {\bf q} dx&=&\int_T\nabla\phi_0 \cdot {\bf q} dx-\int_{\partial T} (\phi_0-\phi_b) {\bf q}\cdot \bn ds\\
 &=&\int_T \nabla \phi_0\cdot \nabla \phi_0 \,dx+h^{-1}\int_{\partial T} ( \phi_0-\phi_b)^2\,ds\\
&=& \|\nabla \phi_0\|_{T}^2+h^{-1}\| \phi_0-\phi_b\|^2_{\partial T}.
 \end{eqnarray*}
Using (\ref{WG2}) and (\ref{norm}), we have
\begin{eqnarray*}
\|\nabla_d \phi\|_T\geq \frac1{\|{\bf q}\|}(\nabla_d \phi, {\bf q})_T&=&\frac1{\|{\bf q}\|}(\| \nabla\phi_0\|_{T}^2+h^{-1}\| \phi_0-\phi_b\|^2_{\partial T})\\
&\geq& C (\| \nabla\phi_0\|_{T}^2+h^{-1}\|
\phi_0-\phi_b\|^2_{\partial T}),
\end{eqnarray*}
then together with (\ref{bound1}), we have the assertion.
\end{proof}

With the results established in Lemma \ref{poincare}, we are ready
to derive an error estimate for the discrete time weak Galerkin
approximation $u_h$ as the following theorem.

\begin{theorem}\label{theorem2}
Let $u\in H^{1+r}(\Omega)$ and $U^n$ be the solutions of (\ref{continuousP}) and (\ref{discreteWG}), respectively. Denote by $e^n:=U^n-Q_h u(t_n)$ the difference between the backward Euler weak Galerkin approximation and the $L^2$ projection of the exact solution $u$. Then there exists a constant $C$ such that
$$
\|e^n\|^2+\sum_{j=1}^n\alpha k\|\nabla_d e^j\|^2\leq \|e^0\|^2+ C(h^{2r}\|u\|^2_{1+r}+k^2\int_0^{t_n} \|u_{tt}\|^2\,ds),\quad \mbox{ for } n\geq 0,
$$
and
$$
\|\nabla_d e^n\|^2\leq C\left(\|e^0\|^2+\|\nabla_d e^0\|^2 +h^{2r}(\|u\|_{1+r}^2+\|u_t\|^2_{1+r})+k^2\int_0^{t_n}\|u_{tt}\|^2ds\right),
$$
where $\|u\|_{1+r}=\displaystyle\max_{j=1,\cdots,n}\{\|u(t_j)\|_{1+r}\}$ and $\|u_t\|_{1+r}=\displaystyle\max_{j=1,\cdots,n}\{\|u_t(t_j)\|_{1+r}\}$.
\end{theorem}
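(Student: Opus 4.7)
The plan is to mirror Theorem \ref{theorem1}, replacing the continuous time derivative by the backward difference operator $\bar\partial$ and accounting for an extra temporal truncation. First I would derive a discrete error equation. Testing (\ref{continuousP}) at $t=t_n$ against $v_0$ as in the derivation of (\ref{errorEQ}), and subtracting (\ref{discreteWG}), one gets
$$
(\bar\partial e^n, v_0) + a(e^n, v) = (\eta_n, \nabla_d v) - (\omega_n, v_0), \qquad \forall\, v=\{v_0,v_b\}\in S_h^0(j,l),
$$
where $\eta_n := \Pi_h(a\nabla u(t_n)) - a\bRh(\nabla u(t_n))$ is the spatial consistency term controlled by Lemma \ref{approx}, and $\omega_n := \bar\partial u(t_n)-u_t(t_n)$ is the temporal truncation, satisfying $\|\omega_n\|^2 \le Ck\int_{t_{n-1}}^{t_n}\|u_{tt}\|^2\,ds$ by Taylor's theorem with integral remainder.

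For the $L^2$-type estimate I would test with $v=e^n$, use the elementary inequality $(\bar\partial e_0^n, e_0^n)\ge \tfrac1{2k}(\|e_0^n\|^2-\|e_0^{n-1}\|^2)$, coercivity $a(e^n,e^n)\ge \alpha\|\nabla_d e^n\|^2$, and split the right-hand side via Cauchy--Schwarz and Young so that an $\frac{\alpha}{2}\|\nabla_d e^n\|^2$ term is absorbed into the left. Multiplying by $2k$ and telescoping from $j=1$ to $n$ yields
$$
\|e^n\|^2 + \sum_{j=1}^n\alpha k\|\nabla_d e^j\|^2 \le \|e^0\|^2 + \tfrac{2k}{\alpha}\sum_{j=1}^n\|\eta_j\|^2 + 2\sum_{j=1}^n k\,\|\omega_j\|\,\|e_0^j\|.
$$
Lemma \ref{approx} gives $\sum_j k\|\eta_j\|^2\le Ch^{2r}t_n\|u\|_{1+r}^2$ under the $\max$ convention, while the $\omega$-term, combined with the Taylor bound and a discrete Gronwall argument (or absorption via Poincar\'e, Lemma \ref{poincare}), produces the $k^2\int_0^{t_n}\|u_{tt}\|^2 ds$ contribution.

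For the $H^1$-type estimate I would test with $v=\bar\partial e^n\in S_h^0(j,l)$. Since $a$ depends on $t$, I need the time-discrete product rule
$$
2k\,a^n(e^n,\bar\partial e^n) = a^n(e^n,e^n) - a^{n-1}(e^{n-1},e^{n-1}) - k\,((\bar\partial a)\nabla_d e^{n-1},\nabla_d e^{n-1}) + k^2 a^n(\bar\partial e^n,\bar\partial e^n),
$$
where the third term is controlled using the assumption $|(a_t\nabla_d u,\nabla_d v)|\le\gamma\|\nabla_d u\|\|\nabla_d v\|$, and the last is nonnegative. Summation from $j=1$ to $n$ telescopes the first two terms. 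The crucial step is a summation by parts on the spatial-consistency contribution, in direct analogy with the integration by parts used in Theorem \ref{theorem1}:
$$
\sum_{j=1}^n k(\eta_j,\nabla_d\bar\partial e^j) = (\eta_n,\nabla_d e^n) - (\eta_1,\nabla_d e^0) - \sum_{j=1}^{n-1} k\,(\bar\partial\eta_{j+1},\nabla_d e^j).
$$
Lemma \ref{approx} gives $\|\eta_j\|\le Ch^r\|u(t_j)\|_{1+r}$, and using the discrete product rule $\bar\partial(a\nabla u)\approx a_t\nabla u + a\nabla u_t$ one also obtains $\|\bar\partial\eta_{j+1}\|\le Ch^r(\|u\|_{1+r}+\|u_t\|_{1+r})$. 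The term $(\omega_j,\bar\partial e_0^j)$ is split by Young so that $\tfrac12\|\bar\partial e_0^j\|^2$ is absorbed into $\sum_j k\|\bar\partial e_0^j\|^2$ on the LHS.

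The main obstacle is bookkeeping the $a_t$ contribution: the discrete product rule creates the term $\gamma k\sum_j\|\nabla_d e^{j-1}\|^2$, which must be controlled before the target quantity $\alpha\|\nabla_d e^n\|^2$ is isolated. For this I would invoke the first assertion just proved, which bounds $\sum_j k\|\nabla_d e^j\|^2$ by precisely the desired right-hand side. After using coercivity/boundedness to exchange $a^n(e^n,e^n)-a^0(e^0,e^0)$ for $\alpha\|\nabla_d e^n\|^2-\beta\|\nabla_d e^0\|^2$, collecting the bounded remainder terms, and dropping the nonnegative $k^2 a^n(\bar\partial e^n,\bar\partial e^n)$ piece, the second inequality follows.
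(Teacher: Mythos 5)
Your proposal is correct and, in its skeleton, is the paper's own argument: the same discrete error equation (the paper's (\ref{DerrorEQ}), with its $w_1^n$, $w_2^n$ equal to your $-\omega_n$, $\eta_n$), the same test functions $v=e^n$ and $v=\bar{\partial}e^n$, the Taylor-integral bound for the temporal truncation, Lemma \ref{approx} for the spatial consistency term, Lemma \ref{poincare} to absorb the truncation term in the first estimate, and the bootstrap of the first assertion to control $k\sum_j\|\nabla_d e^j\|^2$ in the gradient estimate. Where you genuinely deviate is in the gradient estimate. The paper rewrites $(w_2^n,\nabla_d\bar{\partial}e^n)$ through a mixed identity involving the continuous derivative $(w_2^n)_t$ together with $\bar{\partial}w_2^n$, and then needs a second Taylor argument producing terms like $\int\|u_{tt}\|_{1+r}^2\,ds$, which do not appear explicitly in the stated bound; your purely discrete Abel summation, $\sum_{j=1}^n k(\eta_j,\nabla_d\bar{\partial}e^j)=(\eta_n,\nabla_d e^n)-(\eta_1,\nabla_d e^0)-\sum_{j=1}^{n-1}k(\bar{\partial}\eta_{j+1},\nabla_d e^j)$, only requires $\|\bar{\partial}\eta_{j+1}\|\le Ch^r(\|u\|_{1+r}+\|u_t\|_{1+r})$ and therefore matches the regularity actually quoted in the theorem (with the minor caveat that the averaged difference quotient calls for the maxima of $\|u\|_{1+r}$ and $\|u_t\|_{1+r}$ over the time intervals rather than only at the nodes, a bookkeeping point the paper also elides). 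Similarly, your explicit discrete product rule with the correction $-k((\bar{\partial}a)\nabla_d e^{n-1},\nabla_d e^{n-1})$, controlled by the $\gamma$-assumption on $a_t$, makes rigorous the telescoping of $(a\nabla_d e^j,\nabla_d e^j)$, which the paper carries out as if $a$ were frozen in time. Both routes land on the stated estimates; yours is tighter on these two points at the cost of the extra bookkeeping noted above.
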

\begin{proof}
A calculation corresponding to the error equation (\ref{errorEQ})
yields
\begin{eqnarray*}
&& \quad (\bar{\partial} (U^n-Q_h u(t_n)),v_0)+a(U^n-Q_h
u(t_n)),v)\\
&&=(u_t(t_n)-\bar{\partial}u(t_n),v_0)+(\Pi_h (a\nabla u(t_n))-a
\bRh (\nabla u(t_n)),\nabla_d v),
\end{eqnarray*}
i.e.,
\begin{equation}\label{DerrorEQ}
(\bar{\partial}
e^n,v_0)+a(e^n,v)=(u_t(t_n)-\bar{\partial}u(t_n),v_0)+(\Pi_h
(a\nabla u(t_n))-a \bRh (\nabla u(t_n)),\nabla_d v).
\end{equation}

Let $w_1^n=u_t(t_n)-\bar{\partial}u(t_n)$, and $w_2^n=\Pi_h (a\nabla
u(t_n))-a \bRh (\nabla u(t_n))$, and choosing $v=e^n$ in
(\ref{DerrorEQ}), we have
$$
(\bar{\partial} e^n,e^n)+a(e^n,e^n)=(w_1^n,e^n)+(w_2^n,\nabla_d e^n).
$$
By coercivity of the bilinear form and Cauchy-Schwarz inequality, we obtain
$$
\|e^n\|^2-(e^{n-1},e^n)+\alpha k\|\nabla_d e^n\|^2\leq k\|w_1^n\|\|e^n\|+k\|w_2^n\|\|\nabla_d e^n\|,
$$
i.e.,
$$
\|e^n\|^2+\alpha k\|\nabla_d e^n\|^2\leq \frac12\|e^{n-1}\|^2+\frac12\|e^{n}\|^2+k\|w_1^n\|\|e^n\|+k\|w_2^n\|\|\nabla_d e^n\|.
$$
By the Poincar\'e inequality of Lemma \ref{poincare}, we have
$$
\frac12\|e^{n}\|^2+\alpha k\|\nabla_d e^n\|^2\leq\frac12\|e^{n-1}\|^2+k(C\|w_1^n\|+\|w_2^n\|)(\|\nabla_d e^n\|).
$$
Then by triangle inequality, it follows
$$
\frac12\|e^n\|^2+\frac{\alpha k}2\|\nabla_d e^n\|^2\leq\frac12\|e^{n-1}\|^2+\frac{k}{2\alpha}(C\|w_1^n\|+\|w_2^n\|)^2.
$$
so that
\begin{eqnarray*}
\|e^n\|^2+\alpha k\|\nabla_d e^n\|^2\leq \|e^{n-1}\|^2+\frac{Ck}{\alpha}\|w_1^n\|^2+\frac{k}{\alpha}\|w_2^n\|^2.
\end{eqnarray*}
and, by repeated application,
\begin{equation}\label{Derror}
\|e^n\|^2+\sum_{j=1}^n\alpha k\|\nabla_d e^j\|^2\leq \|e^0\|^2+\frac{Ck}{\alpha}\sum_{j=1}^n \|w_1^j\|^2+\frac k\alpha\sum_{j=1}^n \|w_2^j\|^2.
\end{equation}

We write
\begin{equation}\label{w1integral}
kw_1^j=ku_t(t_j)-(u(t_j)-u(t_{j-1}))=\int_{t_{j-1}}^{t_j} (s-t_{j-1})u_{tt}(s)\,ds,
\end{equation}
i.e.,
$$
w_1^j=u_t(t_j)-\frac{u(t_j)-u(t_{j-1})}{k}=\frac1k\int_{t_{j-1}}^{t_j} (s-t_{j-1})u_{tt}(s)\,ds,
$$
so that
\begin{eqnarray}\label{w1}
\|w_1^j\|^2&=&\int_\Omega  \{\frac1k\int_{t_{j-1}}^{t_j} (s-t_{j-1})u_{tt}(s)\,ds\}^2\,dx\nonumber\\
&\leq&\frac1{k^2} \int_\Omega \int_{t_{j-1}}^{t_j}(s-t_{j-1})^2\,ds \int_{t_{j-1}}^{t_j} u^2_{tt}(s)\,ds\,\,dx\\
&\leq& Ck \int_{t_{j-1}}^{t_j} \|u_{tt}\|^2\,ds.\nonumber
\end{eqnarray}
Substitute (\ref{w1}) into (\ref{Derror}) and together with Lemma
\ref{approx}, we have the error estimate for $\|e^n\|$.

In order to show an estimate for $\nabla_d e^n$ we may choose
instead $v=\bar{\partial} e^n$ in error equation (\ref{Derror}) to
obtain the following identity
$$
(\bar{\partial} e^n,\bar{\partial} e^n)+a(e^n,\bar{\partial} e^n)=(w^n_1,\bar{\partial} e^n)+(w^n_2,\nabla_d \bar{\partial} e^n).
$$
The second term on the right hand side can be written as
$$
(w^n_2,\nabla_d \bar{\partial} e^n)=\bar{\partial} (w^n_2,\nabla_d e^n)-((w_2^n)_t-\bar{\partial} w^n_2,\nabla_d e^{n-1})+((w_2^n)_t,\nabla_d e^{n-1}).
$$
Then the error equation becomes
\begin{eqnarray*}
k\|\bar{\partial} e^n\|^2+(a\nabla_d e^n,\nabla_d e^n)&=&(a\nabla_d e^n,\nabla_d e^{n-1})+k(w_1,\bar{\partial} e^n)\\
&+&k\bar{\partial}(w^n_2,\nabla_d e^n)-k((w_2^n)_t-\bar{\partial} w^n_2,\nabla_d e^{n-1})+k((w_2^n)_t,\nabla_d e^{n-1}).
\end{eqnarray*}
By triangle inequality, we have
\begin{eqnarray*}
k\|\bar{\partial} e^n\|^2+(a\nabla_d e^n,\nabla_d e^n)&\leq& \frac12(a\nabla_d e^n,\nabla_d e^n)+\frac12(a\nabla_d e^{n-1},\nabla_d e^{n-1})\\
&+&\frac{k}4\|w_1^n\|^2+k\|\bar{\partial} e^n\|^2+k\bar{\partial}(w^n_2,\nabla_d e^n)\\
&+&\frac{k}2\|(w_2^n)_t-\bar{\partial} w^n_2\|^2+\frac{k}2\|\nabla_d e^{n-1}\|^2\\
&+&\frac{k}{2}\|(w_2^n)_t\|^2+\frac{k}{2}\|\nabla_d e^{n-1}\|^2,
\end{eqnarray*}
and, after cancelation and by repeated application,
\begin{eqnarray*}
\frac12(a\nabla_d e^n,\nabla_d e^n)&\leq& \frac12(a\nabla_d e^{0},\nabla_d e^{0})\\
&+&\frac{k}4\sum_{j=1}^n\|w_1^j\|^2+(w^n_2,\nabla_d e^n)-(w^0_2,\nabla_d e^0)\\
&+&\frac{k}2\sum_{j=1}^n\|(w_2^n)_t-\bar{\partial} w^j_2\|^2+\frac{k}{2}\sum_{j=1}^n\|(w_2^n)_t\|^2+k\sum_{j=1}^n\|\nabla_d e^{j-1}\|^2,
\end{eqnarray*}
which is
\begin{eqnarray}\label{graderror}
\frac{\alpha}2\|\nabla_d e^n\|^2&\leq& \frac{\beta}2\|\nabla_d e^{0}\|^2\nonumber\\
&+&\frac{k}4\sum_{j=1}^n\|w_1^j\|^2+\frac1{\alpha}\|w^n_2\|^2+\frac{\alpha}4\|\nabla_d e^n\|^2+\frac1{2\beta}\|w^0_2\|^2+\frac{\beta}2\|\nabla_d e^0\|^2\\
&+&\frac{k}2\sum_{j=1}^n\|(w_2^n)_t-\bar{\partial} w^j_2\|^2+\frac{k}{2}\sum_{j=1}^n\|(w_2^n)_t\|^2+k\sum_{j=1}^n\|\nabla_d e^{j-1}\|^2,\nonumber
\end{eqnarray}
followed by triangle inequality and boundness and coercivity of the bilinear form.
By the similar process as in (\ref{w1}) and Lemma \ref{approx}, we have
$$
\|(w_2^n)_t-\bar{\partial} w^j_2\|^2\leq Ck\int_{t_{j-1}}^{t_j}\|(w_2)_{tt}\|^2\,ds\leq C kh^{2r}\int_{t_{j-1}}^{t_j}\|u_{tt}\|^2_{1+r}\,ds.
$$

Then by substituting (\ref{w1}), (\ref{Derror}) and the above
inequality into (\ref{graderror}), we have the error estimate for
$\|\nabla_d e^n\|$ as the following
$$
\|\nabla_d e^n\|^2\leq C\left(\|e^0\|^2+\|\nabla_d e^0\|^2 +h^{2r}(\|u\|_{1+r}^2+\|u_t\|^2_{1+r})+k^2\int_0^{t_n}\|u_{tt}\|^2ds\right),
$$
which completes the proof.
\end{proof}

\subsection{Optimal Order of Error Estimation in $L^2$}
To get an optimal order of error estimate in $L^2$, the idea,
similar to Wheeler's projection as in \cite{Wheeler} and
\cite{VThomee}, is used where an elliptic projection $E_h$ onto the
discrete weak space $S_h(j,l)$ is defined as the following: Find
$E_h v\in S_h(j,l)$ such that $E_hv$ is the $L^2$ projection of the
trace of $v$ on the boundary $\partial\Omega$ and
\begin{equation}\label{ellipticOP}
(a\nabla_d E_h v,\nabla_d \chi)=(-\nabla\cdot(a\nabla v),\chi),\quad
\forall\chi\in S_h^0(j,l).
\end{equation}
In view of the weak formulation of the elliptic problem,
\begin{eqnarray}\label{ellipticP}
-\nabla\cdot(a\nabla v)= F\quad \mbox{ in } \Omega,\\
v=g\quad \mbox{on }\partial \Omega,\nonumber
\end{eqnarray}
this definition may be expressed by saying that $E_h v$ is the weak Galerkin finite element approximation of the solution of the corresponding elliptic problem with exact solution $v$. By the error estimate results in \cite{JW_WG}, we have the following error estimate for $E_h v$.
\begin{lemma}\label{elliptic_error}
Assume that problem (\ref{ellipticP}) has the $H^{1+s}$ regularity $(s\in (0,1])$. Let $v\in H^{1+r}$ be the exact solution of (\ref{ellipticP}), and $E_h v$ be a weak Galerkin approximation of $v$ defined in (\ref{ellipticOP}). Let $Q_h v=\{Q_0 v, Q_b v\}$ be the $L^2$ projection of $v$ in the corresponding finite element space. Then there exists a constant $C$ such that
\begin{equation}\label{ellipticEE}
\|Q_0 v-E_h v\|\leq C(h^{s+1}\|F-Q_0 F\|+h^{r+s}\|v\|_{r+1}),
\end{equation}
and
\begin{equation}\label{ellipticEE2}
\|\nabla_d (Q_h v-E_h v)\|\leq Ch^r\|v\|_{r+1}.
\end{equation}
\end{lemma}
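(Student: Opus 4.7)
The plan is to follow the standard weak Galerkin elliptic error analysis from \cite{JW_WG} and adapt it to the precise quantities that appear in our setting. Both bounds are obtained from the Galerkin orthogonality that is built into the definition (\ref{ellipticOP}) of $E_h v$: rewriting the right-hand side of (\ref{ellipticOP}) via identity (\ref{identity2}) gives
$$
(a\nabla_d E_h v,\nabla_d \chi)=(\Pi_h(a\nabla v),\nabla_d \chi),\quad \forall \chi\in S_h^0(j,l),
$$
and combining this with identity (\ref{identity1}), $\nabla_d(Q_h v)=\bRh(\nabla v)$, yields the clean error identity
$$
(a\nabla_d(Q_h v-E_h v),\nabla_d \chi)=(a\bRh(\nabla v)-\Pi_h(a\nabla v),\nabla_d \chi).
$$
This is the workhorse for both estimates.

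For the $H^1$-type estimate (\ref{ellipticEE2}), the plan is to set $\chi=Q_h v-E_h v$ (noting that it lies in $S_h^0$ because $E_h v$ matches the $L^2$ projection of the trace on $\partial\Omega$). Coercivity of $a(\cdot,\cdot)$ on the left, Cauchy--Schwarz on the right, and Lemma \ref{approx} applied to $\|\Pi_h(a\nabla v)-a\bRh(\nabla v)\|\leq Ch^r\|v\|_{1+r}$ deliver the bound directly.

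For the $L^2$-type estimate (\ref{ellipticEE}), I would run an Aubin--Nitsche duality argument. Define $\phi\in H_0^1(\Omega)$ as the solution of $-\nabla\cdot(a\nabla\phi)=Q_0 v-E_h v$, which enjoys $\|\phi\|_{1+s}\le C\|Q_0 v-E_h v\|$ by the assumed $H^{1+s}$ regularity. Testing the definition of the dual problem against $Q_0 v-E_h v$, inserting $\pm E_h\phi$ and $\pm Q_h\phi$, and using the Galerkin orthogonality above (now in the dual direction) reduces $\|Q_0 v-E_h v\|^2$ to a sum of three inner products: the data-approximation term that produces $h^{s+1}\|F-Q_0 F\|$ (since only $Q_0 F$ is seen by the weak Galerkin scheme, whereas the dual test sees the full $F$), plus two consistency terms of the form $(a\bRh(\nabla w)-\Pi_h(a\nabla w),\nabla_d\cdot)$ for $w=v$ and $w=\phi$. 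Each consistency term is bounded by Lemma \ref{approx} and the $H^1$ estimate (\ref{ellipticEE2}) just proved, producing a factor $h^r\|v\|_{1+r}\cdot h^s\|\phi\|_{1+s}$; dividing through by $\|Q_0 v-E_h v\|$ gives the $h^{r+s}\|v\|_{r+1}$ contribution.

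The main obstacle, and the reason the right-hand side of (\ref{ellipticEE}) carries a separate $\|F-Q_0 F\|$ term, is precisely the bookkeeping in the duality step: the weak Galerkin method only ``sees'' $F$ through the $L^2$ projection $Q_0 F$ in (\ref{ellipticOP}), so when one inserts $E_h\phi$ into the dual pairing and replaces the exact right-hand side by its projected version, the residual $F-Q_0 F$ must be carried along and estimated separately with a full $h^{s+1}$ factor picked up from $\|\phi-Q_0\phi\|\le Ch^{s+1}\|\phi\|_{1+s}$. Once that splitting is handled carefully, the remainder of the calculation is a routine combination of Cauchy--Schwarz, Lemma \ref{approx}, and the elliptic regularity bound on $\phi$.
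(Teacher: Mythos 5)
Your proposal is sound, but note first that the paper does not actually prove this lemma: it observes that, by the definition (\ref{ellipticOP}), $E_h v$ is precisely the weak Galerkin approximation of the elliptic problem (\ref{ellipticP}) with exact solution $v$, and then simply quotes the $H^1$ and $L^2$ estimates from \cite{JW_WG}. What you have written reconstructs the content of that citation. Your energy argument for (\ref{ellipticEE2}) --- the orthogonality relation obtained from (\ref{identity2}), the identity $\nabla_d(Q_h v)=\bRh(\nabla v)$, the choice $\chi=Q_h v-E_h v\in S_h^0(j,l)$, coercivity, and Lemma \ref{approx} --- is exactly the standard WG elliptic $H^1$ proof and is complete as stated. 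Your duality argument for (\ref{ellipticEE}) has the right architecture, but the one step that carries the real work is under-specified: after using the error identity with $\chi=Q_h\phi$, the $w=v$ consistency term becomes $(a\bRh(\nabla v)-\Pi_h(a\nabla v),\bRh(\nabla\phi))$, and Cauchy--Schwarz together with Lemma \ref{approx} alone gives only $Ch^r\|v\|_{1+r}\|\nabla\phi\|$, i.e. it loses the factor $h^s$; the same loss occurs if you test with $\nabla_d E_h\phi$ instead, so inserting $\pm E_h\phi$ does not by itself fix it. To recover $h^s$ one must further insert $\pm\nabla\phi$, use the $L^2$-orthogonality of $\bRh$ on $V(T,r)$ to handle $(a(\bRh\nabla v-\nabla v),\nabla\phi)$, and integrate $(a\nabla v-\Pi_h(a\nabla v),\nabla\phi)$ by parts elementwise, using $\Pi_h(a\nabla v)\in H(div,\Omega)$, $\phi\in H^1_0(\Omega)$, and the commuting property $(\nabla\cdot\Pi_h {\bf q},v_0)_T=(\nabla\cdot {\bf q},v_0)_T$; this last step is exactly where the term $(F-Q_0F,\phi-Q_0\phi)\leq Ch^{s+1}\|F-Q_0F\|\,\|\phi\|_{1+s}$ that you anticipated in your final paragraph materializes. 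With that refinement your sketch closes and coincides with the proof in \cite{JW_WG}; the paper's approach buys brevity by outsourcing precisely this duality bookkeeping, while yours makes the lemma self-contained.
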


Throughout this section the error in the parabolic problem
(\ref{continuousP}) is written as a sum of two terms,
\begin{equation}\label{twoterm}
u_h(t)-Q_h u(t)= \theta(t)+\rho(t),\quad \mbox{ where } \theta=u_h-E_h u,\quad \rho=E_h u-Q_h u,
\end{equation}
which will be bounded separately. Notice that the second term is the error in an elliptic problem and then can be handled by applying the results in Lemma \ref{elliptic_error}. Then our main goal here is to bound the first term $\theta$.

Following the above strategy, the error estimate for continuous time weak Galerkin finite element method in $L^2$ and $H^1$ are provided in the next two theorems.

\begin{theorem}\label{RiezeL2}
Under the assumption of Theorem \ref{theorem1} and the assumption that the corresponding elliptic problem has the $H^{1+s}$ regularity $(s\in (0,1])$, there exists a constant $C$ such that
\begin{eqnarray*}
\|u_h(t)-Q_h u(t)\|&\leq& \|u_h(0)-Q_h u(0)\|+Ch^{r+s}(\|\psi\|_{r+1}+\int_0^t\|u_t\|_{r+1}\,ds)\\
&+&Ch^{s+1}(\|f(0)-Q_0f(0)\|+\|u_t(0)-Q_0u_t(0)\|)\\
&+&Ch^{s+1}\left\{ \int_0^t (\|f_t-Q_0f_t\|+\|u_{tt}-Q_0u_{tt}\|)\,ds\right\}.
\end{eqnarray*}
\end{theorem}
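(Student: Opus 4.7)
The plan is to exploit the splitting $u_h - Q_h u = \theta + \rho$ from (\ref{twoterm}) and control the two pieces separately. The quantity $\rho = E_h u - Q_h u$ is a purely elliptic error, so Lemma \ref{elliptic_error} applied at each time $t$ (with auxiliary right-hand side $F = f - u_t = -\nabla\cdot(a\nabla u)$) immediately yields
\[
\|\rho(t)\| \leq Ch^{s+1}\|F(t)-Q_0F(t)\| + Ch^{r+s}\|u(t)\|_{r+1}.
\]
The work is therefore concentrated on bounding $\theta = u_h - E_h u$.

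To derive an error equation for $\theta$, I first rewrite the definition of $E_h u$ using the PDE: since $-\nabla\cdot(a\nabla u) = f - u_t$, (\ref{ellipticOP}) gives $a(E_h u, v) = (f - u_t, v_0)$ for every $v=\{v_0,v_b\} \in S_h^0(j,l)$. Subtracting this from the semidiscrete WG equation (\ref{semiWG}) and using the identity $(u_t, v_0)=(Q_0 u_t, v_0)=((Q_h u)_t, v_0)$ (which holds because $v_0\in P_j(T)$ and $Q_0$ commutes with $\partial_t$) produces the clean error equation
\[
(\theta_t, v_0) + a(\theta, v) = -(\rho_t, v_0), \qquad \forall v=\{v_0,v_b\}\in S_h^0(j,l).
\]
Testing with $v=\theta$, dropping the nonnegative $\alpha\|\nabla_d\theta\|^2$ term, and applying Cauchy--Schwarz yields $\tfrac{d}{dt}\|\theta\| \leq \|\rho_t\|$, which integrates to $\|\theta(t)\| \leq \|\theta(0)\| + \int_0^t \|\rho_t\|\,ds$; the initial piece is then controlled via $\|\theta(0)\| \leq \|u_h(0)-Q_h u(0)\| + \|\rho(0)\|$.

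Next I estimate $\rho_t$. Differentiating the defining equation of $E_h u$ in time shows $(E_h u)_t$ solves the elliptic projection problem for $u_t$ (at least when $a$ is independent of $t$, so that $(E_h u)_t = E_h u_t$); since $u_t$ satisfies the elliptic equation with right-hand side $f_t - u_{tt}$, Lemma \ref{elliptic_error} gives
\[
\|\rho_t\| \leq Ch^{s+1}(\|f_t - Q_0 f_t\| + \|u_{tt}-Q_0 u_{tt}\|) + Ch^{r+s}\|u_t\|_{r+1}.
\]
Evaluating the elliptic estimate for $\rho$ at $t=0$ with $u(0)=\psi$ yields the $\|f(0)-Q_0 f(0)\|$, $\|u_t(0)-Q_0 u_t(0)\|$, and $\|\psi\|_{r+1}$ terms in the theorem.

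Finally, assembling $\|u_h - Q_h u\| \leq \|\theta\| + \|\rho\|$ and using the ``fundamental theorem'' style bounds
\[
\|u(t)\|_{r+1}\leq \|\psi\|_{r+1}+\int_0^t\|u_t\|_{r+1}\,ds
\]
together with the analogous inequalities for $\|f(t)-Q_0 f(t)\|$ and $\|u_t(t)-Q_0 u_t(t)\|$ to absorb the pointwise $\|\rho(t)\|$ bound into the integral quantities, yields the claimed inequality. The main technical obstacle is the commutation step $(E_h u)_t = E_h u_t$: when $a$ depends on $t$, differentiating (\ref{ellipticOP}) produces an extra term $(a_t \nabla_d E_h u, \nabla_d \chi)$ on the right-hand side, which must be controlled via the hypothesis $|(a_t \nabla_d u, \nabla_d v)|\leq \gamma\|\nabla_d u\|\,\|\nabla_d v\|$ from Section 2 and the $H^1$-type estimate (\ref{ellipticEE2}); once this contribution is absorbed, the structure of the argument is unchanged.
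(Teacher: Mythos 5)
Your proof follows essentially the same route as the paper: the same splitting $u_h-Q_hu=\theta+\rho$ via the elliptic projection $E_h$, the same error equation $(\theta_t,\chi)+a(\theta,\chi)=-(\rho_t,\chi)$ tested with $\chi=\theta$, and the same use of Lemma \ref{elliptic_error} to bound $\rho$, $\rho_t$, and $\theta(0)$. Your additions — writing out the fundamental-theorem step that converts $\|f(t)-Q_0f(t)\|$ and $\|u_t(t)-Q_0u_t(t)\|$ into time-zero values plus integrals of time derivatives, and flagging the commutation $(E_hu)_t=E_hu_t$ when $a$ depends on $t$ (which the paper invokes silently) — are refinements of, not departures from, the paper's argument.
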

\begin{proof}
We write the error according to (\ref{twoterm}) and obtain the error bound for $\rho$ easily by Lemma \ref{elliptic_error} as the following
\begin{equation}\label{rhobound}
\|\rho\|\leq C(h^{s+1}(\|f-Q_0f\|+\|u_t-Q_0u_t\|)+h^{r+s}(\|\psi\|_{r+1}+\int_0^t\|u_t\|_{r+1}\,ds)).
\end{equation}
In order to estimate $\theta$, we note that by our definitions
\begin{eqnarray*}
(\theta_t,\chi)+a(\theta,\chi)&=&(u_{h,t},\chi)+a(u_h,\chi)-(E_h u_t,\chi)-a(E_h u, \chi)\\
&=&(f,\chi)-(E_h u_t,\chi)-a(E_h u, \chi)\\
&=&(f,\chi)+(\nabla\cdot(a\nabla u),\chi)-(E_h u_t,\chi)\\
&=&(u_t,\chi)-(E_h u_t,\chi)\\
&=&(Q_h u_t,\chi)-(E_h u_t,\chi)\\
&=&-(\rho_t,\chi),
\end{eqnarray*}
which is
\begin{equation}\label{thetaeq}
(\theta_t,\chi)+a(\theta,\chi)=-(\rho_t,\chi),\quad \forall \chi\in
S_h^0(j,l),\,\: t>0,
\end{equation}
where we have used the fact that the operator $E_h$ commutes with
time differentiation. Since $\theta\in S_h^0(j,l)$, we may choose
$\chi=\theta$ in (\ref{thetaeq}) and obtain
$$
(\theta_t,\theta)+a(\theta,\theta)=-(\rho_t,\theta),\quad t>0.
$$
Since
$$
a(\theta,\theta)\geq \alpha\|\nabla_d\theta\|^2>0,
$$
we have
$$
\frac12\frac{d}{dt}\|\theta\|^2=\|\theta\|\frac{d}{dt} \|\theta\|\leq \|\rho_t\|\|\theta\|,
$$
and hence
$$
\|\theta(t)\|\leq \|\theta(0)\|+\int_0^t\|\rho_t\|\,ds.
$$
Using Lemma \ref{elliptic_error}, we find
\begin{eqnarray}\label{theta0}
\|\theta(0)\|&=&\|u_h(0)-E_h u(0)\|\\
&\le&\|u_h(0)-Q_h u(0)\|+\|E_h u(0)-Q_h u(0)\| \nonumber \\
&\leq&\|u_h(0)-Q_h u(0)\|\nonumber\\
& & +C[h^{s+1}(\|f(0)-Q_0
f(0)\|+\|u_t(0)-Q_0u_t(0)\|)+h^{r+s}\|\psi\|_{r+1}],\nonumber
\end{eqnarray}
and since
\begin{equation}\label{rhot}
\|\rho_t\|=\|E_h u_t-Q_h u_t\|\leq C[h^{s+1}(\|f_t-Q_0 f_t\|+\|u_{tt}-Q_0u_{tt}\|)+h^{r+s}\|u_{t}\|_{r+1}],
\end{equation}
the desired bound for $\|\theta(t)\|$ now follows.
\end{proof}

\begin{theorem}\label{RiezeH1}
Under the assumption of Theorem \ref{RiezeL2} and the assumption that the coefficient matrix $a$ in (\ref{continuousP}) is independent of time $t$, there exists a constant $C$ such that
\begin{eqnarray*}
\|\nabla_d(u_h(t)-Q_h u(t))\|^2&\leq&4\beta \|\nabla_d(u_h(0)-Q_h u(0))\|^2+Ch^{2r}(\|\psi\|_{r+1}^2+\|u\|^2_{r+1})\\
&+&Ch^{2(s+1)}\int_0^t (\|f_t-Q_0 f_t\|+\|u_{tt}-Q_0u_{tt}\|)^2\,ds+Ch^{2(r+s)}\int_0^t \|u_{t}\|^2_{r+1}\,ds.
\end{eqnarray*}
\end{theorem}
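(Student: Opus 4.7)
The plan is to run the Wheeler-type splitting already set up in \eqref{twoterm}, namely $u_h-Q_h u = \theta + \rho$ with $\theta = u_h - E_h u$ and $\rho = E_h u - Q_h u$, and to estimate $\nabla_d \theta$ and $\nabla_d \rho$ separately. The $\rho$-piece is immediate: Lemma \ref{elliptic_error}, inequality \eqref{ellipticEE2}, gives $\|\nabla_d \rho(t)\| \le C h^r \|u(t)\|_{r+1}$, so all the work goes into $\theta$. The triangle inequality $\|\nabla_d(u_h-Q_h u)\|^2 \le 2\|\nabla_d \theta\|^2 + 2\|\nabla_d \rho\|^2$ will paste the two bounds together at the end and is the source of the $4\beta$ prefactor in the final estimate.

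For $\theta$, I would use the error equation \eqref{thetaeq} from the proof of Theorem \ref{RiezeL2}, namely $(\theta_t,\chi)+a(\theta,\chi) = -(\rho_t,\chi)$ for all $\chi\in S_h^0(j,l)$, but now test with $\chi=\theta_t$ instead of $\chi=\theta$. This is exactly the step that forces the time-independence hypothesis on $a$: with $a$ independent of $t$ the bilinear form satisfies $a(\theta,\theta_t) = \tfrac{1}{2}\tfrac{d}{dt} a(\theta,\theta)$, so the identity becomes
\[
\|\theta_t\|^2 + \tfrac{1}{2}\tfrac{d}{dt} a(\theta,\theta) = -(\rho_t,\theta_t).
\]
A Cauchy--Schwarz plus Young step absorbs $\tfrac{1}{2}\|\theta_t\|^2$ into the left and leaves $\tfrac{1}{2}\|\rho_t\|^2$ on the right.

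Integrating from $0$ to $t$, discarding the nonnegative $\int_0^t \|\theta_t\|^2 ds$, and applying coercivity on the left and boundedness on the right yields $\alpha\|\nabla_d \theta(t)\|^2 \le \beta \|\nabla_d \theta(0)\|^2 + \int_0^t \|\rho_t\|^2 ds$. The initial piece $\|\nabla_d \theta(0)\|$ is split as $\|\nabla_d(u_h(0)-Q_h u(0))\| + \|\nabla_d(Q_h u(0)-E_h u(0))\|$, the second piece controlled by \eqref{ellipticEE2} at $t=0$, producing the $h^{2r}\|\psi\|_{r+1}^2$ term. The $\|\rho_t\|$ term is estimated by reusing \eqref{rhot} from the proof of Theorem \ref{RiezeL2} (valid since $E_h$ commutes with $\partial_t$), which after squaring and integrating in time produces precisely the $h^{2(s+1)}\!\int_0^t(\|f_t-Q_0 f_t\|+\|u_{tt}-Q_0 u_{tt}\|)^2\,ds$ and $h^{2(r+s)}\!\int_0^t \|u_t\|_{r+1}^2\,ds$ contributions.

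I expect the main obstacle to be bookkeeping rather than a conceptual difficulty: keeping the constants from Young's inequality, coercivity, and boundedness aligned well enough that the prefactor on $\|\nabla_d(u_h(0)-Q_h u(0))\|^2$ comes out as $4\beta$ and the $h^{2r}\|u\|_{r+1}^2$ term appears with the supremum in $t$ dictated by the triangle-inequality step. Conceptually, the one genuinely delicate point is that testing with $\chi=\theta_t$ requires $\theta_t\in S_h^0(j,l)$ and only works cleanly because $a_t\equiv 0$; were $a$ time-dependent, an extra $\tfrac{1}{2}(a_t\nabla_d\theta,\nabla_d\theta)$ term would appear, forcing a Gronwall argument and additional hypotheses on $a_t$, which is why the hypothesis is written into the theorem.
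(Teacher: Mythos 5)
Your proposal is correct and follows essentially the same route as the paper: the same splitting $u_h-Q_hu=\theta+\rho$, testing the error equation (\ref{thetaeq}) with $\chi=\theta_t$, exploiting $a_t\equiv 0$ to write $a(\theta,\theta_t)=\tfrac12\tfrac{d}{dt}a(\theta,\theta)$, integrating with coercivity/boundedness, and estimating $\nabla_d\theta(0)$ and $\rho_t$ via Lemma \ref{elliptic_error} and (\ref{rhot}). The only differences are cosmetic bookkeeping of constants, which the paper itself treats loosely.
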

\begin{proof}
As in the proof of Theorem \ref{RiezeL2}, we write the error in the form (\ref{twoterm}). Here by Lemma \ref{elliptic_error},
\begin{equation}\label{gradrho}
\|\nabla_d \rho(t)\|\leq Ch^{r}\|u\|_{r+1}.
\end{equation}
In order to estimate $\nabla_d\theta$, we may choose $\chi=\theta_t$
in the equation (\ref{thetaeq}) for $\theta$. We obtain
$$
(\theta_t,\theta_t)+a(\theta,\theta_t)=-(\rho_t,\theta_t).
$$
Since the coefficient matrix $a$ in the bilinear form
$a(\cdot,\cdot)$ is independent of time $t$, we have
$$
\|\theta_t\|^2+\frac12\frac{d}{dt}(a\nabla_d\theta,\nabla_d\theta)=-(\rho_t,\theta_t)\leq\frac12\|\rho_t\|^2+\frac12\|\theta_t\|^2,
$$
so that
$$
\frac{d}{dt}(a\nabla_d\theta,\nabla_d\theta)\leq\|\rho_t\|^2.
$$
Then by integrating with respect to time $t$ and using the coercivity and boundedness of the bilinear form, we obtain
\begin{eqnarray*}
\alpha\|\nabla_d \theta\|^2&\leq& (a\nabla_d\theta,\nabla_d\theta)\leq (a\nabla_d\theta(0),\nabla_d\theta(0))+\int_0^t \|\rho_t\|^2\, ds
\leq \beta\|\nabla_d\theta(0)\|^2+\int_0^t \|\rho_t\|^2\, ds\\
&\leq& \beta(\|\nabla_d(u_h(0)-Q_h u(0))\|+\|\nabla_d(E_h u(0)-Q_h u(0))\|)^2+\int_0^t \|\rho_t\|^2\, ds.
\end{eqnarray*}
Hence, in view of Lemma \ref{elliptic_error} and (\ref{rhot}), we have
\begin{eqnarray*}
\|\nabla_d\theta(t)\|^2&\leq& 2\beta \|\nabla_d(u_h(0)-Q_h u(0))\|^2+Ch^{2r}\|\psi\|_{r+1}^2\\
& &+Ch^{2(s+1)}\int_0^t (\|f_t-Q_0
f_t\|+\|u_{tt}-Q_0u_{tt}\|)^2\,ds+Ch^{2(r+s)}\int_0^t
\|u_{t}\|^2_{r+1}\,ds,
\end{eqnarray*}
which completes the proof.
\end{proof}

Next, we derive an error estimate for the backward Euler weak
Galerkin method.
\begin{theorem}\label{RiezeDL2}
Let $u\in H^{1+r}(\Omega)$ and $U^n$ be the solutions of
(\ref{continuousP}) and (\ref{discreteWG}), respectively. And let
$Q_h u$ be the $L^2$ projection of the exact solution $u$. Then
there exists a constant $C$ such that
\begin{eqnarray*}
\|U^n-Q_h u(t_n)\|&\leq& \|U^0-Q_h u(0)\|+Ch^{r+s}(\|\psi\|_{r+1}+\int_0^{t_n}\|u_t\|_{r+1}\,ds)\\
&+&Ch^{s+1}(\|f(0)-Q_0f(0)\|+\|u_t(0)-Q_0u_t(0)\|)\\
&+&Ch^{s+1}(\|f(t_n)-Q_0f(t_n)\|+\|u_t(t_n)-Q_0u_t(t_n)\|)\\
&+&Ch^{s+1}\left\{ \int_0^{t_n} (\|f_t-Q_0f_t\|+\|u_{tt}-Q_0u_{tt}\|)\,ds\right\}\\
&+&Ck\int_0^{t_n} \|u_{tt}\|\,ds.
\end{eqnarray*}
\end{theorem}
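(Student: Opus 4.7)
The plan is to mirror the continuous-time argument of Theorem \ref{RiezeL2} but with the backward Euler time discretization. First I would decompose the error in exactly the same way as (\ref{twoterm}),
$$
U^n - Q_h u(t_n) = \theta^n + \rho^n, \qquad \theta^n = U^n - E_h u(t_n), \qquad \rho^n = E_h u(t_n) - Q_h u(t_n),
$$
and apply Lemma \ref{elliptic_error} directly to $\rho^n$ to recover the $Ch^{s+1}(\|f(t_n)-Q_0f(t_n)\|+\|u_t(t_n)-Q_0u_t(t_n)\|)$ and $Ch^{r+s}\|u(t_n)\|_{r+1}$ contributions that appear in the statement.

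Next I would derive a discrete analogue of (\ref{thetaeq}) for $\theta^n$. Using (\ref{discreteWG}) for $U^n$, the definition of $E_h$, and the identity $-\nabla\cdot(a\nabla u(t_n)) = f(t_n) - u_t(t_n)$, the mixed terms cancel and the leftover $u_t(t_n)$ can be split as $\bar\partial u(t_n) + w_1^n$ with $w_1^n = u_t(t_n) - \bar\partial u(t_n)$. Since $\bar\partial u(t_n)$ is then absorbed into $\bar\partial Q_h u(t_n) = \bar\partial E_h u(t_n) - \bar\partial \rho^n$, one obtains
$$
(\bar\partial \theta^n, \chi) + a(\theta^n, \chi) = (w_1^n, \chi) - (\bar\partial \rho^n, \chi), \quad \forall \chi \in S_h^0(j,l).
$$
Testing with $\chi = \theta^n$, invoking $(\bar\partial \theta^n, \theta^n) \geq \bar\partial\bigl(\tfrac{1}{2}\|\theta^n\|^2\bigr)$ and the coercivity of $a(\cdot,\cdot)$, and then dropping the nonnegative $\alpha \|\nabla_d \theta^n\|^2$ term yields the recursion $\|\theta^n\| \leq \|\theta^{n-1}\| + k(\|w_1^n\| + \|\bar\partial \rho^n\|)$, from which
$$
\|\theta^n\| \leq \|\theta^0\| + k\sum_{j=1}^n \|w_1^j\| + k\sum_{j=1}^n \|\bar\partial \rho^j\|.
$$

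The initial error $\|\theta^0\|$ is handled exactly as in (\ref{theta0}), contributing the $\|U^0 - Q_h u(0)\|$ term together with a $Ch^{s+1}(\|f(0)-Q_0f(0)\|+\|u_t(0)-Q_0u_t(0)\|) + Ch^{r+s}\|\psi\|_{r+1}$ piece via Lemma \ref{elliptic_error}. The two residual sums are then telescoped: the identity (\ref{w1integral}) gives $k\|w_1^j\| \leq k\int_{t_{j-1}}^{t_j}\|u_{tt}\|\,ds$, so $k\sum_j\|w_1^j\| \leq k\int_0^{t_n}\|u_{tt}\|\,ds$, producing the $Ck\int_0^{t_n}\|u_{tt}\|\,ds$ term. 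Similarly
$$
k\|\bar\partial \rho^j\| = \Bigl\|\int_{t_{j-1}}^{t_j} \rho_t(s)\,ds\Bigr\| \leq \int_{t_{j-1}}^{t_j} \|\rho_t\|\,ds,
$$
and (\ref{rhot}) applied along the interval $[0,t_n]$ yields the $Ch^{s+1}\int_0^{t_n}(\|f_t-Q_0 f_t\| + \|u_{tt}-Q_0 u_{tt}\|)\,ds$ and $Ch^{r+s}\int_0^{t_n}\|u_t\|_{r+1}\,ds$ terms.

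I expect the main technical obstacle to be the bookkeeping in deriving the $\theta^n$ equation, specifically handling the mismatch between $\bar\partial E_h u(t_n)$ and $E_h \bar\partial u(t_n)$ (which requires commuting $E_h$ with $\bar\partial$ via its linearity and time-independence) and cleanly separating the consistency error $w_1^n$ from the $\bar\partial \rho^n$ correction without introducing spurious terms. Once that error identity is in place, the remainder reduces to the routine discrete-in-time stability estimate and the telescoping computations above, which closely parallel (\ref{w1integral})--(\ref{w1}) used in Theorem \ref{theorem2}.
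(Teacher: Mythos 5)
Your proposal is correct and follows essentially the same route as the paper: the same splitting $U^n-Q_hu(t_n)=\theta^n+\rho^n$, the same discrete error equation tested with $\chi=\theta^n$, the telescoped recursion $\|\theta^n\|\leq\|\theta^{n-1}\|+k\|w^n\|$, and the same treatment of $\theta^0$, $w_1^j$ via (\ref{w1integral}), and the elliptic-projection residual via (\ref{rhot}). Your writing of the residual as $-(\bar\partial\rho^n,\chi)$ is equivalent to the paper's $w_3^n=\bar\partial u(t_n)-\bar\partial E_h u(t_n)$, since $(\bar\partial u(t_n),\chi)=(\bar\partial Q_h u(t_n),\chi)$ for $\chi$ in the discrete test space.
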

\begin{proof}
In analogy  with (\ref{twoterm}), we write
$$
U^n-Q_h u(t_n)=(U^n-E_h u(t_n))+(E_h u(t_n)-Q_h u(t_n))=\theta^n+\rho^n,
$$
where $\rho^n=\rho(t_n)$ is bounded as shown in (\ref{rhobound}). In order to bound $\theta^n$, we use
\begin{eqnarray*}
(\bar{\partial} \theta^n, \chi)+a(\theta^n,\chi)&=&(\bar{\partial} U^n, \chi)+a(U^n,\chi)-(\bar{\partial} E_h u(t_n), \chi)-a(E_h u(t_n),\chi)\\
&=&(f(t_n),\chi)-(\bar{\partial} E_h u(t_n), \chi)-a(E_h u(t_n),\chi)\\
&=&(f(t_n),\chi)+(\nabla\cdot(a\nabla u(t_n)),\chi)-(\bar{\partial} E_h u(t_n), \chi)\\
&=&(u_t(t_n),\chi)-(\bar{\partial} E_h u(t_n), \chi)\\
&=&(u_t(t_n)-\bar{\partial} u(t_n),\chi)+(\bar{\partial} u(t_n)-\bar{\partial} E_h u(t_n), \chi),
\end{eqnarray*}
i.e.,
\begin{equation}\label{thetabar}
(\bar{\partial} \theta^n, \chi)+a(\theta^n,\chi)=(w^n,\chi),
\end{equation}
where
$$
w^n=(u_t(t_n)-\bar{\partial} u(t_n))+(\bar{\partial} u(t_n)-\bar{\partial} E_h u(t_n))=w^n_1+w^n_3.
$$
By choosing $\chi=\theta^n$ in (\ref{thetabar}) and the coercivity of the bilinear form, we have
$$
(\bar{\partial} \theta^n, \theta^n)\leq\|w^n\|\|\theta^n\|,
$$
or
$$
\|\theta^n\|^2-(\theta^{n-1}, \theta^n)\leq k \|w^n\|\|\theta^n\|,
$$
so that
$$
\|\theta^n\|\leq \|\theta^{n-1}\|+k\|w^n\|,
$$
and, by repeated application, it follows
$$
\|\theta^n\|\leq \|\theta^0\|+k\sum^n_{j=1}\|w^j\|\leq  \|\theta^0\|+k\sum^n_{j=1}\|w_1^j\|+k\sum^n_{j=1}\|w_3^j\|.
$$
As in (\ref{theta0}), $\theta^0=\theta(0)$ is bounded as desired. By using the representation in (\ref{w1integral}), we obtain
$$
k\sum^n_{j=1}\|w_1^j\|\leq \sum^n_{j=1}\|\int_{t_{j-1}}^{t_j} (s-t_{j-1})u_{tt}(s)\,ds\|\leq k\int_0^{t_n}\|u_{tt}\|\,ds.
$$
We write
\begin{equation}\label{w3}
w_3^j=\bar{\partial} u(t_n)-E_h \bar{\partial} u(t_n)=(E_h-I)k^{-1}\int_{t_{j-1}}^{t_j} u_t\,ds=k^{-1} \int_{t_{j-1}}^{t_j} (E_h-I) u_t\,ds,
\end{equation}
and, by (\ref{rhot}) we have
\begin{eqnarray*}
k\sum^n_{j=1}\|w_3^j\|&\leq& \sum^n_{j=1}\int_{t_{j-1}}^{t_j} C[h^{s+1}(\|f_t-Q_0 f_t\|+\|u_{tt}-Q_0u_{tt}\|)+h^{r+s}\|u_{t}\|_{r+1}]\, ds\\
&=&C[h^{s+1}\int_{0}^{t_n} (\|f_t-Q_0 f_t\|+\|u_{tt}-Q_0u_{tt}\|)\,ds+h^{r+s}\int_{0}^{t_n} \|u_{t}\|_{r+1}\,ds],
\end{eqnarray*}
Thus, together with the estimate of $\rho$ in (\ref{rhobound}), we
have the assertion.
\end{proof}

\begin{theorem}
Under the assumption of Theorem \ref{RiezeDL2}, and the assumption
that the coefficient matrix $a$ is independent of time $t$, there
exists a constant $C$ such that
\begin{eqnarray*}
\|\nabla_d(U^n-Q_h u(t_n))\|^2&\leq&2\|\nabla_d(U^0-Q_h u(0))\|^2+Ch^{2r}(\|\psi\|_{r+1}^2+\|u\|^2_{r+1})\\
&+&Ch^{2(s+1)}\int_0^{t_n} (\|f_t-Q_0 f_t\|+\|u_{tt}-Q_0u_{tt}\|)^2\,ds+Ch^{2(r+s)}\int_0^{t_n} \|u_{t}\|^2_{r+1}\,ds\\
&+&Ck^2\int_0^{t_n} \|u_{tt}\|^2\,ds.
\end{eqnarray*}
\end{theorem}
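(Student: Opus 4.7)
The plan is to follow the parabolic projection decomposition used in Theorem \ref{RiezeDL2} and mirror the continuous-time argument of Theorem \ref{RiezeH1}, but with the backward Euler difference in place of the time derivative. I would write $U^n - Q_h u(t_n) = \theta^n + \rho^n$, with $\theta^n = U^n - E_h u(t_n)$ and $\rho^n = E_h u(t_n) - Q_h u(t_n)$, and control the $\rho$-piece immediately by Lemma \ref{elliptic_error}, giving $\|\nabla_d \rho^n\| \leq Ch^r \|u(t_n)\|_{r+1}$. After the triangle inequality $\|a+b\|^2 \leq 2(\|a\|^2+\|b\|^2)$, the task reduces to bounding $\|\nabla_d \theta^n\|^2$.

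For $\theta^n$ I would reuse the error identity (\ref{thetabar}), $(\bar\partial\theta^n,\chi)+a(\theta^n,\chi)=(w^n,\chi)$ with $w^n=w_1^n+w_3^n$, and choose $\chi=\bar\partial\theta^n$, which is the discrete analogue of $\chi=\theta_t$ used in Theorem \ref{RiezeH1}. The crucial step is that, because the coefficient matrix $a$ is symmetric and independent of $t$, the bilinear form admits the discrete energy identity
$$a(\theta^n,\bar\partial\theta^n) \;\geq\; \frac{1}{2k}\bigl(a(\theta^n,\theta^n)-a(\theta^{n-1},\theta^{n-1})\bigr),$$
which follows from $2a(\theta^n,\theta^n-\theta^{n-1})=a(\theta^n,\theta^n)-a(\theta^{n-1},\theta^{n-1})+a(\theta^n-\theta^{n-1},\theta^n-\theta^{n-1})$ and the positive semidefiniteness of the last term. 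Combining this with $(w^n,\bar\partial\theta^n)\leq \tfrac12\|w^n\|^2+\tfrac12\|\bar\partial\theta^n\|^2$, the $\|\bar\partial\theta^n\|^2$ contributions cancel, and multiplying by $2k$ and summing telescopes to $a(\theta^n,\theta^n)\leq a(\theta^0,\theta^0)+k\sum_{j=1}^n\|w^j\|^2$. Coercivity and boundedness of $a(\cdot,\cdot)$ then yield $\alpha\|\nabla_d\theta^n\|^2\leq \beta\|\nabla_d\theta^0\|^2+k\sum_{j=1}^n\|w^j\|^2$.

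Estimating $k\sum_{j=1}^n \|w^j\|^2$ is routine given the work already done in Theorem \ref{RiezeDL2}. For $w_1^j$, the representation (\ref{w1integral}) and Cauchy-Schwarz as in (\ref{w1}) give $\|w_1^j\|^2\leq Ck\int_{t_{j-1}}^{t_j}\|u_{tt}\|^2\,ds$, so $k\sum\|w_1^j\|^2\leq Ck^2\int_0^{t_n}\|u_{tt}\|^2\,ds$. For $w_3^j$, identity (\ref{w3}) and Cauchy-Schwarz in time yield $\|w_3^j\|^2\leq k^{-1}\int_{t_{j-1}}^{t_j}\|(I-E_h)u_t\|^2\,ds$, hence $k\sum\|w_3^j\|^2\leq \int_0^{t_n}\|(I-E_h)u_t\|^2\,ds$, and splitting $(I-E_h)u_t$ through $Q_h u_t$ and invoking (\ref{rhot}) together with standard $L^2$ projection estimates produces the asserted $h^{2(s+1)}$ and $h^{2(r+s)}$ contributions. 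The starting term is handled by $\|\nabla_d\theta^0\|^2\leq 2\|\nabla_d(U^0-Q_h u(0))\|^2+2\|\nabla_d(Q_h u(0)-E_h u(0))\|^2$, the latter absorbed into $Ch^{2r}\|\psi\|_{r+1}^2$ via Lemma \ref{elliptic_error}. The main technical obstacle is precisely the algebraic handling of $a(\theta^n,\bar\partial\theta^n)$: it relies decisively on both symmetry and time-independence of $a$; without the latter hypothesis one would be forced to retain an $a_t$-type term and replace the clean telescoping by a discrete Gr\"onwall argument, as in Theorem \ref{theorem2}.
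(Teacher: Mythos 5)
Your proposal is correct and follows essentially the same route as the paper: the same decomposition $U^n-Q_hu(t_n)=\theta^n+\rho^n$, the same choice $\chi=\bar{\partial}\theta^n$ in the error equation (\ref{thetabar}), the same symmetric-$a$ telescoping identity $a(\theta^n,\bar{\partial}\theta^n)=\tfrac12\bar{\partial}a(\theta^n,\theta^n)+\tfrac{k}{2}a(\bar{\partial}\theta^n,\bar{\partial}\theta^n)$, and the same bounds on $w_1^j$ and $w_3^j$ via (\ref{w1integral}), (\ref{w3}), (\ref{rhot}) and Lemma \ref{elliptic_error}. Your explicit splitting of $(I-E_h)u_t$ through $Q_hu_t$ is in fact slightly more careful than the paper's identification of this term with $\rho_t$.
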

\begin{proof}
It is sufficient to estimate $\nabla_d \theta^n$. To this end, we
choose $\chi=\bar{\partial}\theta^n$ in (\ref{thetabar}), and it is
easily seen that
$$
(\bar{\partial} \theta^n, \bar{\partial}
\theta^n)+a(\theta^n,\bar{\partial} \theta^n) =\|\bar{\partial}
\theta^n\|^2+\frac12\bar{\partial}a(\theta^n,\theta^n)+\frac{k}2a(\bar{\partial}\theta^n,
\bar{\partial}\theta^n)=(w^n,\bar{\partial}\theta^n),
$$
so that
$$
\bar{\partial}a(\theta^n,\theta^n)\leq \|w^n\|^2.
$$
By repeating the application, we have
\begin{eqnarray*}
a(\theta^n, \theta^n)&\leq& a(\theta^0,\theta^0)+k\sum_{j=0}^n\|w^j\|^2 \\
&\leq& a(\theta^0,\theta^0)+2k\sum_{j=0}^n
\|w_1^j\|^2+2k\sum_{j=0}^n \|w_3^j\|^2.
\end{eqnarray*}
As in (\ref{w3}), we obtain
$$
k\|w_3^j\|^2=k\int_\Omega (k^{-1} \int_{t_{j-1}}^{t_j}  \rho_t\,ds)^2\, dx\leq \int_\Omega ( \int_{t_{j-1}}^{t_j}  \rho_t^2\,ds)\, dx\leq \int_{t_{j-1}}^{t_j}  \|\rho_t\|^2\,ds.
$$
Together with (\ref{w1}), (\ref{rhot}) and (\ref{gradrho}), we have
the assertion.
\end{proof}

\section{Numerical Experiments}

In section 2, we mentioned that the discrete weak space $S_h(j,l)$
and $\sum_h$ in the weak Galerkin method need to satisfy two
conditions. In \cite{JW_WG}, the authors proposed several possible
combinations of $S_h(j,l)$ and $\sum_h$. Through out this section we
use a uniform triangular mesh $\mathcal{T}_h$, the discrete weak
space $S_h(0,0)$, i.e., space consisting of piecewise constants on
the triangles and edges respectively, and $\sum_h$ with $V(T,1)$ to
be the lowest order Raviart-Thomas element $RT_0(T)$ in the weak
Galerkin method, which were used in \cite{NWG} for the numerical
studies of the weak Galerkin method for second-order elliptic
problems. We also adopt the various norms used in \cite{NWG} to
present the numerical results of the error $e_h$ between the $L^2$
projection $Q_h u$ of the exact solution and the numerical solution
$u_h$.

{\bf Example 1.} As the first example, we consider the following heat equation in $\Omega=(0,1)\times(0,1)$,
\begin{eqnarray}\label{example}
               u_t-\nabla\cdot(a\nabla u)  &=& f \quad \mbox{ in } \Omega,\quad \mbox{ for } t>0,\nonumber\\
               u &=& g\quad \mbox{ on }\partial\Omega, \quad \mbox{ for } t>0,\\
               u(\cdot, 0)&=&\psi\quad \mbox{ in } \Omega,\nonumber
             \end{eqnarray}
where $a=\left[ \begin{array} {cc} 1&0\\0&1\end{array}\right]$ and
$f$, $g$ and $\psi$ are determined by setting the exact solution
$u=\sin(2\pi(t^2+1)+\pi/2)\sin(2\pi x +\pi/2)\sin(2\pi y +\pi/2)$.

For this inhomogeneous Dirichlet boundary condition, with a uniform triangular
mesh $\mathcal{T}_h$, we chose the approximation space
$$
S_h=\left\{ v=\{v_0,v_b\}\,:\, \begin{array}r
v_0\in P_0(T),\quad\mbox{ for all } T\in \mathcal{T}_h,\\
v_b \in P_0(e) \mbox{ for all } T\in \mathcal{T}_h\mbox{ and } e\subset\partial T\notin \partial \Omega,\\
v_b=g_h \quad\mbox{ for all } T\in \mathcal{T}_h \mbox{ and } \partial T\in \partial \Omega
\end{array}
\right\}
$$
where $g_h$ is the $L^2$ projection of $g$ in the piecewise constant
finite element space on the boundary $\partial\Omega$. In the test,
$k=h$ and $k=h^2$ are used to check the order of convergency with
respect to time step size $k$ and mesh size $h$ respectively, since
the convergence rate of the error is dominated by that of the time
step size $k$ when $k=h$, and the convergence rate of the error is
dominated by that of the mesh size $h$ when $k=h^2$. The results are
shown in Table 1 and Table 2. Since the exact solution is smooth, we
observe the optimal convergence rates in both $L^2$ and weak
Galerkin $H^1$ norms for the Dirichlet boundary data type initial
boundary value problem, which is consistent with the theoretical
results shown in section 4 and 5.

\begin{table}[!!h]\label{tablesec}
\begin{center}
\caption{Convergence rate for heat equation with inhomogeneous Dirichlet boundary condition with $k=h$} 
\begin{tabular}{|c|c|c|c|c|c|}
\hline
$h$&${\|e_h\|}_{\{\infty,T\}}$&${\|e_h\|}_{\{\infty,\partial T\}}$&${\|\nabla_d e_h\|}$&${\|e_h\|}_{\{L^2,T\}}$ &${\|e_h\|}_{\{L^2,\partial T\}} $\\
\hline\hline
1/8&1.38e-01&1.44e-01&2.41e-01&4.90e-02&8.78e-02\\
1/16&6.97e-02&7.26e-02&8.34e-02&2.20e-02&4.03e-02\\
1/32&3.47e-02&3.56e-02&2.97e-02&1.05e-02&1.94e-02\\
1/64&1.72e-02&1.75e-02&1.10e-02&5.16e-03&9.54e-03\\
1/128&8.59e-03&8.65e-03&4.27e-03&2.56e-03   &4.74e-03\\
\hline\hline
$O(h^r)\,r=$&1.0012 &1.0138 & 1.4550&1.0643&1.0533\\
\hline
\end{tabular}
\end{center}
\end{table}

\begin{table}[!!h]\label{tablesec-2}
\begin{center}
\caption{Convergence rate for heat equation with inhomogeneous Dirichlet boundary condition with $k=h^2$} 
\begin{tabular}{||c||c||c||c||c||c||}
\hline\hline
$h$&${\|e_h\|}_{\{\infty,T\}}$&${\|e_h\|}_{\{\infty,\partial T\}}$&${\|\nabla_d e_h\|}$&${\|e_h\|}_{\{L^2,T\}}$ &${\|e_h\|}_{\{L^2,\partial T\}} $\\
\hline\hline
1/8&7.11e-02& 8.30e-02& 7.81e-02& 3.28e-02 &5.39e-02 \\
1/16&1.89e-02& 2.28e-02& 1.84e-02& 8.53e-03& 1.38e-02 \\
1/32&4.79e-03& 5.84e-03& 4.52e-03 &2.16e-03 &3.49e-03 \\
1/64&1.21e-03 &1.48e-03& 1.13e-03& 5.43e-04& 8.79e-04 \\
1/128&3.64e-04& 4.31e-04& 2.88e-04& 1.49e-04& 2.47e-04 \\
\hline\hline
$O(h^r)\,r=$&1.9025&1.8994 & 2.0213&1.9454&1.9418\\
\hline\hline
\end{tabular}
\end{center}
\end{table}

Next, we consider this heat problem with a mixed boundary condition
$$
\left\{ \begin{array}r
u=g\quad \mbox{ on }  \partial \Omega_1,\\
a\nabla u\cdot n+u=0, \quad\mbox{ on } \partial \Omega_2,
\end{array}
\right.
$$
where $\partial \Omega_1\cap\partial \Omega_2=\emptyset$, and
$\partial \Omega_1\cup\partial \Omega_2=\partial \Omega$. The exact
solution is set to be $u=\sin(2\pi(t^2+1)+\pi/2)\sin(\pi y)e^{-x}$,
where $\partial\Omega_2$ is the boundary segment $x=1$ and
$\partial\Omega_1$ is the union of all other boundary segments. For
the mixed boundary data type of initial boundary value problem, we
also achieved the optimal convergence rates of the error in all
norms as shown in Table 3 and Table 4.

\begin{table}[!!h]\label{tablesec-3}
\begin{center}
\caption{Convergence rate for heat equation with Robin boundary condition with $k=h$} 
\begin{tabular}{||c||c||c||c||c||c||}
\hline\hline
$h$&${\|e_h\|}_{\{\infty,T\}}$&${\|e_h\|}_{\{\infty,\partial T\}}$&${\|\nabla_d e_h\|}$&${\|e_h\|}_{\{L^2,T\}}$ &${\|e_h\|}_{\{L^2,\partial T\}} $\\
\hline\hline
1/8&1.65e-01&1.72e-01&1.80e-01&9.86e-02&1.83e-01\\
1/16&1.00e-01&1.02e-01&8.55e-02&5.86e-02&1.09e-01\\
1/32&5.54e-02&5.59e-02&4.01e-02&3.22e-02&5.95e-02\\
1/64&2.92e-02&2.93e-02&1.91e-02&1.69e-02&3.13e-02\\
1/128&1.50e-02&1.50e-02&9.24e-03&8.67e-03&1.60e-02\\
\hline\hline
$O(h^r)\,r=$&0.8656&0.8793&1.0713&0.8767&   0.8789\\
\hline\hline
\end{tabular}
\end{center}
\end{table}

\begin{table}[!!h]\label{tablesec-4}
\begin{center}
\caption{Convergence rate for heat equation with Robin boundary condition with $k=h^2$} 
\begin{tabular}{||c||c||c||c||c||c||}
\hline\hline
$h$&${\|e_h\|}_{\{\infty,T\}}$&${\|e_h\|}_{\{\infty,\partial T\}}$&${\|\nabla_d e_h\|}$&${\|e_h\|}_{\{L^2,T\}}$ &${\|e_h\|}_{\{L^2,\partial T\}} $\\
\hline\hline
1/8&3.18e-02&3.90e-02&2.61e-02&1.88e-02&3.57e-02\\
1/16&8.29e-03&1.01e-03&6.28e-03&4.87e-03&9.22e-03\\
1/32&2.10e-03&2.54e-03&1.55e-03&1.23e-03&2.32e-03\\
1/64&5.24e-04&6.34e-04&3.88e-04&3.08e-04&5.83e-04\\
1/128&1.39e-04&1.62e-04&1.06e-04&8.79e-05&1.65e-04\\
\hline\hline
$O(h^r)\,r=$&1.9591&1.9785&1.9875&1.9352&1.9383\\
\hline\hline
\end{tabular}
\end{center}
\end{table}

\medskip

{\bf Example 2.} For the second example, we consider the parabolic
problem (\ref{example}) of full tensor with Dirichlet boundary
condition and coefficient matrix $a=\left[ \begin{array} {cc}
x^2+y^2+1 &xy\\xy&x^2+y^2+1\end{array}\right]$, which is symmetric
and positive definite, and $f$, $g$ and $\psi$ are determined by
setting the exact solution $u=\sin(2\pi(t^2+1)+\pi/2)\sin(2\pi x
+\pi/2)\sin(2\pi y +\pi/2)$. The results are shown in Table 5 and
Table 6, which confirm the theoretical rates of convergence in
$L^2$. For the discrete $H^1$ norm, the numerical convergence is of
order $\mathcal{O}(h^2)$ which is one order higher than the
theoretical prediction. We believe that this suggests a
superconvergence between the weak Galerkin finite element
approximation and the $L^2$ projection of the exact solution.
Interested readers are encouraged to conduct a study on the
superconvergence phenomena.

\begin{table}[!!h]\label{tablesec-5}
\begin{center}
\caption{Convergence rate for parabolic problem with inhomogeneous Dirichlet boundary condition with $k=h$} 
\begin{tabular}{||c||c||c||c||c||c||}
\hline\hline
$h$&${\|e_h\|}_{\{\infty,T\}}$&${\|e_h\|}_{\{\infty,\partial T\}}$&${\|\nabla_d e_h\|}$&${\|e_h\|}_{\{L^2,T\}}$ &${\|e_h\|}_{\{L^2,\partial T\}} $\\
\hline\hline
1/8&1.21E-01&   1.38E-01&   3.57E-01&   4.64E-02&   8.14E-02\\
1/16&5.64E-02&  6.10E-02&   1.23E-01&1.87E-02   &3.39E-02\\
1/32&2.67E-02&  2.81E-02    &4.34E-02&  8.35E-03&   1.54E-02\\
1/64&1.29E-02   &1.33E-02&  1.55E-02&   3.95E-03&   7.29E-03\\
1/128&6.33E-03& 6.42E-03&   5.61E-03&   1.92E-03&   3.55E-03\\
\hline\hline
$O(h^r)\,r=$&1.0654&    1.1076  &1.4978&    1.1487& 1.1301\\
\hline\hline
\end{tabular}
\end{center}
\end{table}

\begin{table}[!!h]\label{tablesec-6}
\begin{center}
\caption{Convergence rate for parabolic problem with inhomogeneous Dirichlet boundary condition with $k=h^2$} 
\begin{tabular}{||c||c||c||c||c||c||}
\hline\hline
$h$&${\|e_h\|}_{\{\infty,T\}}$&${\|e_h\|}_{\{\infty,\partial T\}}$&${\|\nabla_d e_h\|}$&${\|e_h\|}_{\{L^2,T\}}$ &${\|e_h\|}_{\{L^2,\partial T\}} $\\
\hline\hline
1/8&7.41E-02&   9.68E-02&   1.24E-01&   3.53E-02&   5.74E-02\\
1/16&1.92E-02&  2.56E-02&   3.00E-02&   9.14E-03&   1.47E-02\\
1/32&4.83E-03&  6.44E-03&   7.45E-03&   2.30E-03&   3.70E-03\\
1/64&1.21E-03&  1.62E-03&   1.86E-03&   5.78E-04&   9.28E-04\\
1/128&3.35E-04& 4.34E-04&   4.67E-04&   1.53E-04&   2.48E-04\\
\hline\hline
$O(h^r)\,r=$&1.9474 &1.9500&    2.0118  &1.9637&    1.9636\\
\hline\hline
\end{tabular}
\end{center}
\end{table}

\newpage

\end{document}